\newtheorem{thm}{Theorem}[section]
\newtheorem{cor}[thm]{Corollary}
\newtheorem{lem}[thm]{Lemma}
\theoremstyle{definition}
\numberwithin{equation}{section}
\begin{document}




\title{Nonlinear mappings preserving at least one eigenvalue}

\author{Constantin Costara\footnote{Corresponding author} \\
Faculty of Mathematics and Informatics \\
Ovidius University\\
Mamaia Blvd. 124, Constan\c{t}a, Romania\\
E-mail: cdcostara@univ-ovidius.ro
\and 
Du\v{s}an Repov\v{s}\\
Faculty of Mathematics and Physics, and Faculty of Education \\
University of Ljubljana\\
P.O.Box 2964, Ljubljana 1001, Slovenia\\
E-mail: dusan.repovs@guest.arnes.si}

\date{}

\maketitle


\renewcommand{\thefootnote}{}

\footnote{2010 \emph{Mathematics Subject Classification}: Primary 47A56; Secondary 15A18.}

\footnote{\emph{Key words and phrases}: Spectrum, Lipschitz mapping, holomorphic mapping, preserver.}

\renewcommand{\thefootnote}{\arabic{footnote}}
\setcounter{footnote}{0}


\begin{abstract}
We prove that if $F$ is a Lipschitz map from the set of all complex $n\times n$ matrices into itself with $F(0)=0$ such that given any $x$ and $y$ we have that $%
F\left( x\right) -F\left( y\right) $ and $x-y$ have at least one common
eigenvalue, then either $F\left( x\right) =uxu^{-1}$ or $F\left( x\right)
=ux^{t}u^{-1}$ for all $x$, for some invertible $n\times n$ matrix $u$. We
arrive at the same conclusion by supposing $F$ to be of class $\mathcal{C}%
^{1}$ on a domain in $\mathcal{M}_{n}$ containing the null matrix, instead
of Lipschitz. We also prove that if $F$ is of class $\mathcal{C}^{1}$ on a
domain containing the null matrix satisfying $F(0)=0$ and $\rho (F\left( x\right) -F\left(
y\right) )=\rho (x-y)$ for all $x$ and $y$, where $\rho \left( \cdot \right) 
$ denotes the spectral radius, then there exists $\gamma \in \mathbb{C}$ of
modulus one such that either $\gamma ^{-1}F$ or $\gamma ^{-1}\overline{F}$
is of the above form, where $\overline{F}$ is the (complex) conjugate of $F$.
\end{abstract}

\section{Introduction and statement of results}

Linear preserver problems deal with the question of characterizing those linear transformations on an algebra which leave invariant a certain subset, function or relation defined on the underlying algebra. The study of such transformations began with Frobenius in 1897, who characterized the linear maps on matrix algebras preserving the determinant. In 1949, Dieudonn\'{e} characterized the invertible linear maps preserving the set of singular matrices. In 1959, Marcus and
Moyls \cite[Theorem 6]{MM} proved that if $T:\mathcal{M}_{n}\rightarrow \mathcal{M}_{n}$ is linear (with respect to complex scalars) and an eigenvalue
preserver, then there exists an invertible $u\in \mathcal{M}_{n}$ such that
either 
\begin{equation}
T\left( x\right) =uxu^{-1}\qquad (x\in \mathcal{M}_{n})\quad \mbox{or}
\qquad T\left( x\right) =ux^{t}u^{-1}\quad (x\in \mathcal{M}_{n}).
\label{1}
\end{equation}%
(For a fixed integer $n\geq 1$, we denote by $\mathcal{M}_{n}$ the space
of all complex $n\times n$ matrices and  $x^{t}$ stands for the transpose of $x \in \mathcal{M}_{n}$.) By a density argument, one
can easily see that we arrive at the same conclusion by supposing $\sigma \left( T\left( x\right) \right)
=\sigma \left( x\right) $ for all $x\in \mathcal{M}_{n}$, where $\sigma \left( x\right) $ stands for the spectrum of $x$, that is the set of all its
eigenvalues without counting multiplicities. 

One of the best known results in the theory of linear preservers is the Gleason--Kahane--\.{Z}elazko theorem (cf. e.g. \cite{Ze}), which asserts that a unital linear functional $f$ defined on a (complex, unital) Banach algebra $\mathcal{A}$ is multiplicative if it preserves invertibility. For unital linear functionals, the assumption of preserving invertibility is easily seen to be equivalent to the condition that $f(x)$ belongs to the spectrum of $x$ for every $x \in \mathcal{A}$. Without assuming linearity, one needs to impose stronger preservation properties on $f$ in order to arrive at the same conclusion. Kowalski and Slodkowski proved in \cite{KK} that every functional $f$ on a Banach algebra $\mathcal{A}$ (no linearity assumed on $f$) with $f(0)=0$ such that the difference of the value of any two elements is contained in the spectrum of the difference of those two elements, is linear and multiplicative. Thus, we may replace the linearity assumption and the spectrum-preserving property of $f$ by a single weaker assumption and still arrive at the same conclusion.

In view of this result, it is quite natural to relax the hypothesis of linearity and try to find a criteria of Kowalski--Slodkowski type for maps defined on matrix spaces to be linear morphisms or antimorphisms. For example, Mr\v{c}un proved that the result of Marcus and Moyls \cite[Theorem 6]{MM}
also holds by supposing $T$ to be only $\mathbb{R}$-linear (that
is, additive and homogeneous with respect to scalars from the real field).

\begin{lem}
\label{l1}\cite[Lemma 3]{Mr} Let $T:\mathcal{M}_{n}\rightarrow \mathcal{M}%
_{n}$ be an $\mathbb{R}$-linear mapping such that $T\left( x\right) $ and $x$
have the same spectrum for all $x\in \mathcal{M}_{n}$. Then $T$ is $\mathbb{C%
}$-linear (and therefore of the form (\ref{1})).
\end{lem}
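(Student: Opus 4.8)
The plan is to upgrade the set-theoretic equality of spectra to an identity of characteristic polynomials, then to split $T$ into its $\mathbb{C}$-linear and conjugate-linear parts and show that the latter vanishes, at which point the $\mathbb{C}$-linear part is handled by the Marcus--Moyls theorem. First I would look at the dense open set of matrices having $n$ distinct eigenvalues. If $x$ lies there, then $\sigma(T(x))=\sigma(x)$ is a set of exactly $n$ points, so every eigenvalue of $T(x)$ is simple and $\det(\zeta I-T(x))=\prod_{\lambda\in\sigma(x)}(\zeta-\lambda)=\det(\zeta I-x)$. Since $T$ is $\mathbb{R}$-linear on the finite-dimensional space $\mathcal{M}_n$ it is continuous, and the coefficients of the characteristic polynomial depend continuously on the matrix; hence this identity propagates from a dense set to all of $\mathcal{M}_n$, giving $\det(\zeta I-T(x))=\det(\zeta I-x)$ for every $x$ and every $\zeta$.

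Next I would write $T=P+Q$, where $P(x)=\tfrac12\bigl(T(x)-iT(ix)\bigr)$ is $\mathbb{C}$-linear and $Q(x)=\tfrac12\bigl(T(x)+iT(ix)\bigr)$ is conjugate-linear, so that $T(\lambda x)=\lambda P(x)+\overline{\lambda}\,Q(x)$. Substituting $\lambda x$ into the characteristic-polynomial identity gives $\det(\zeta I-\lambda P(x)-\overline{\lambda}Q(x))=\det(\zeta I-\lambda x)$ for all $\lambda$. Both sides are polynomial in the real and imaginary parts of $\lambda$, so the identity survives complexification and $\lambda,\overline{\lambda}$ may be decoupled into independent variables $\lambda,\mu$, yielding $\det(\zeta I-\lambda P(x)-\mu Q(x))=\det(\zeta I-\lambda x)$ for all $\zeta,\lambda,\mu$. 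Setting $\mu=0$ shows $P$ preserves the characteristic polynomial, hence the spectrum; being $\mathbb{C}$-linear, $P$ is of the form (\ref{1}) by the Marcus--Moyls theorem together with the density remark above.

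Composing $T$ with the conjugation $x\mapsto u^{-1}xu$ (and, in the transpose case, additionally with $x\mapsto x^{t}$)---operations that are $\mathbb{C}$-linear and preserve characteristic polynomials---I may assume $P=\mathrm{id}$. Rerunning the previous derivation for the reduced map then gives $\det(\zeta I-\lambda x-\mu Q(x))=\det(\zeta I-\lambda x)$. Putting $\lambda=0$ yields $\det(\zeta I-\mu Q(x))=\zeta^{n}$, so $Q(x)$ is nilpotent for every $x$; putting $\lambda=1$ shows that the pencil $x+\mu Q(x)$ is isospectral to $x$ for every $\mu\in\mathbb{C}$. It now remains to prove $Q\equiv 0$.

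The hard part will be exactly this last step, because the pointwise conditions---$Q(x)$ nilpotent and $x+\mu Q(x)$ isospectral to $x$---are satisfied by many nonzero nilpotent perturbations (any strictly triangular direction in the eigenbasis of $x$ works), and only the rigidity coming from conjugate-linearity across different $x$ excludes them. My plan here is to evaluate the isospectrality identity on the matrix units and their combinations. Testing $x=E_{ij}$ (itself nilpotent for $i\neq j$) forces $E_{ij}+\mu Q(E_{ij})$ to be nilpotent for all $\mu$, which pins each $Q(E_{ij})$ to a very restricted form (for $n=2$, a scalar multiple of $E_{ij}$); then testing rotated combinations such as $E_{ij}+E_{ji}$, whose eigenvectors are not aligned with the standard basis, and matching the coefficients of the powers of $\mu$ forces all the surviving scalars to vanish. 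Since $Q$ is conjugate-linear and hence determined by its values on the $E_{ij}$, this gives $Q\equiv 0$, whence $T=P$ has the form (\ref{1}). The cases $n=1,2$ are immediate and guide the argument; the bookkeeping of this final elimination for general $n$ is the principal technical difficulty.
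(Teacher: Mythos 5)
Your reductions are sound up to and including the identity $\det(\zeta I-\lambda x-\mu Q(x))=\det(\zeta I-\lambda x)$: passing from equality of spectra to equality of characteristic polynomials via the dense set of matrices with $n$ distinct eigenvalues, splitting $T=P+Q$ into $\mathbb{C}$-linear and conjugate-linear parts, decoupling $\lambda$ and $\overline{\lambda}$ through the polynomial identity in $\mathrm{Re}\,\lambda,\ \mathrm{Im}\,\lambda$, applying Marcus--Moyls to $P$, and normalizing $P=\mathrm{id}$ are all correct. Note that the paper does not prove this lemma at all (it quotes Mr\v{c}un); its own work goes into the generalization, Lemma \ref{l1a}, whose proof starts from the very same decomposition (the maps $R$ and $S$ there are your $P$ and $Q$) but, because only one common eigenvalue is assumed, must invoke the Akbari--Aryapoor theorems, the Hermitian-part map $W$, and Lemma \ref{l2}. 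So your route is genuinely more elementary for the full spectrum-preserving hypothesis --- provided it is finished.

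It is not finished, and that is the gap: the decisive step $Q\equiv 0$ is only a plan, and you yourself concede that the matrix-unit elimination is not carried out for general $n$. Moreover, the tests you propose (matrix units and real combinations such as $E_{ij}+E_{ji}$) never exploit the conjugate-linearity of $Q$ on complex combinations, which is precisely the rigidity you correctly identified as the only thing excluding nonzero nilpotent perturbations; so even the shape of the elimination is uncertain as stated. The good news is that your identity already contains a clean finish. Equality of characteristic polynomials gives $\mathrm{Tr}\,(x+\mu Q(x))^{2}=\mathrm{Tr}\,(x^{2})$, and the coefficient of $\mu$ yields $\mathrm{Tr}(xQ(x))=0$ for every $x$. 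Put $B(x,y)=\mathrm{Tr}(xQ(y))$, which is additive in each variable and vanishes on the diagonal. Polarizing with $x+y$ gives $B(x,y)+B(y,x)=0$, while polarizing with $x+iy$ and using $Q(iy)=-iQ(y)$ gives $-iB(x,y)+iB(y,x)=0$, hence $B(x,y)=B(y,x)$; together these force $\mathrm{Tr}(yQ(x))=0$ for all $x,y$, and the nondegeneracy of the trace pairing gives $Q\equiv 0$, so $T=P$ is of the form (\ref{1}). With this replacement for your final step the argument is complete; without it, the proof stops exactly at its crux.
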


Mr\v{c}un then used the assertion of Lemma \ref{l1} and ideas from \cite{KK}  to solve the following
non-linear preserver problem on matrix spaces.

\begin{thm}
\label{th1}\cite[Theorem 1]{Mr} Let $F:\mathcal{M}_{n}\rightarrow \mathcal{M}%
_{n}$ be a Lipschitz mapping with $F\left( 0\right) =0$ such that 
\begin{equation}
\sigma \left( F\left( x\right) -F\left( y\right) \right) \subseteq \sigma
\left( x-y\right) \qquad \left( x,y\in \mathcal{M}_{n}\right) .  \label{2}
\end{equation}
Then $T$ is of the form (\ref{1}).
\end{thm}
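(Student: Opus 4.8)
The plan is to reduce the statement to Lemma~\ref{l1} by proving two things about $F$: that it is in fact $\mathbb{R}$-linear, and that the one-sided inclusion in~(\ref{2}) can be upgraded to an equality of spectra. Once we know that $F$ is additive and real-homogeneous and that $\sigma(F(z))=\sigma(z)$ for every $z$, Lemma~\ref{l1} immediately yields that $F$ is $\mathbb{C}$-linear and hence of the form~(\ref{1}). Thus the whole difficulty lies in extracting both linearity and spectral equality from the single containment hypothesis~(\ref{2}).

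First I would differentiate. Since $F$ is Lipschitz on the finite-dimensional real vector space $\mathcal{M}_n$, Rademacher's theorem guarantees that $F$ is Fr\'echet differentiable at almost every point. Fix a point $a$ of differentiability and write $L_a=DF(a)$, an $\mathbb{R}$-linear endomorphism of $\mathcal{M}_n$. Applying~(\ref{2}) to the pair $(a+th,a)$ with $t>0$ and dividing by $t$ gives $\sigma\bigl(t^{-1}(F(a+th)-F(a))\bigr)\subseteq\sigma(h)$ for every direction $h$; letting $t\to 0^{+}$ and using that the eigenvalues depend continuously on the matrix while remaining trapped in the fixed closed finite set $\sigma(h)$, one obtains $\sigma(L_a h)\subseteq\sigma(h)$ for all $h$. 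Thus every derivative of $F$ is an $\mathbb{R}$-linear spectral contraction.

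Next I would show that this derivative does not depend on the base point, which together with $F(0)=0$ forces $F$ to be $\mathbb{R}$-linear. The device I would use is a rigidity observation: for any two points $a,b$ and any direction $v$, the curve $\psi(t)=F(a+tv)-F(b+tv)$ satisfies $(a+tv)-(b+tv)=a-b$, so by~(\ref{2}) all eigenvalues of $\psi(t)$ lie in the fixed finite set $\sigma(a-b)$ for every $t$. The coefficients of the characteristic polynomial of $\psi(t)$ are therefore continuous functions of $t$ taking values in a finite set, hence constant; differentiating the resulting identities $\operatorname{tr}(\psi(t)^{k})=\mathrm{const}$ at a point of differentiability produces trace-orthogonality relations constraining $L_a v-L_b v$. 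Pushing these relations, for all $v$, to the conclusion $L_a=L_b$ (so that $F$ has constant derivative and is $\mathbb{R}$-linear) is the step I expect to be most delicate, since constancy of the characteristic polynomial pins down $\psi(t)$ only up to conjugacy, not up to equality.

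Finally, assuming $F$ is $\mathbb{R}$-linear, hypothesis~(\ref{2}) collapses to $\sigma(F(z))\subseteq\sigma(z)$ for all $z$, and it remains to upgrade this to equality. Here I would first note that $F$ preserves invertibility, since $0\notin\sigma(z)$ forces $0\notin\sigma(F(z))$; arguing on the singular hypersurface $\{\det=0\}$ with a dimension and irreducibility argument one aims to show that $F$ maps it onto itself, and after locating the images $F(\lambda I)$ one recovers $\lambda\in\sigma(z)\Leftrightarrow\lambda\in\sigma(F(z))$, that is $\sigma(F(z))=\sigma(z)$. With $F$ now $\mathbb{R}$-linear and genuinely spectrum-preserving, Lemma~\ref{l1} applies and gives~(\ref{1}). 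The two rigidity upgrades, namely the constancy of the derivative and the passage from containment to equality of spectra, are the heart of the argument: differentiation alone yields only one-sided inclusions, so each of them requires genuinely new input beyond the infinitesimal computation.
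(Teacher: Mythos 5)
There is a genuine gap, and it sits exactly where you flag it: the passage from ``every a.e.\ differential $L_a$ satisfies $\sigma(L_a h)\subseteq\sigma(h)$'' to ``$F$ is $\mathbb{R}$-linear'' is not proved. Your proposed rigidity device does not close it: constancy of the characteristic polynomial of $\psi(t)=F(a+tv)-F(b+tv)$ only confines $\psi(t)$ to a fixed finite family of similarity classes, and differentiating $\mathrm{Tr}(\psi(t)^k)$ yields relations of the form $\mathrm{Tr}\bigl(\psi(t)^{k-1}\psi'(t)\bigr)=0$, which are satisfied, for instance, by any curve of the form $\psi(t)=c(t)\,\psi(0)\,c(t)^{-1}$ with $c(t)$ invertible and nonconstant; such a curve has $\psi'\neq 0$, so these trace identities cannot force $L_av=L_bv$. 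Since the whole theorem rests on this upgrade from infinitesimal to global information, the proof is incomplete at its central point. A secondary gap: even granting $\mathbb{R}$-linearity, your route insists on upgrading $\sigma(F(z))\subseteq\sigma(z)$ to equality (in order to invoke Lemma~\ref{l1}), and the suggested argument on the determinantal hypersurface is only a sketch; note also that Lemma~\ref{l1} as stated needs equality of spectra, whereas the containment hypothesis by itself only guarantees a common eigenvalue.

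The paper (following Mr\v{c}un and Kowalski--Slodkowski) fills precisely this hole by a two-stage argument that your plan omits. First, the linear-preserver rigidity is applied not to $F$ but to each a.e.\ real differential: since $\sigma\bigl((DF)_x(u)\bigr)\subseteq\sigma(u)$ and spectra of matrices are nonempty, $(DF)_x(u)$ and $u$ always share an eigenvalue, so Lemma~\ref{l1a} (the intersection version of Lemma~\ref{l1}) shows that $(DF)_x$ is $\mathbb{C}$-linear for a.e.\ $x$. Second, the Kowalski--Slodkowski-type Lemma~\ref{l4} (\cite[Lemma 4]{Mr}, generalizing \cite[Lemma 3.2]{KK}) states that a Lipschitz scalar function having \emph{complex} differentials a.e.\ is affine on every complex line; applied to each entry $F_{s,k}$ this gives $F(a+\lambda b)=\lambda\bigl(F(a+b)-F(a)\bigr)+F(a)$ for all $a,b,\lambda$, and with $F(0)=0$ one reads off homogeneity and additivity, i.e.\ $\mathbb{C}$-linearity of $F$, with no constancy-of-derivative argument at all. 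The conclusion is then immediate: taking $y=0$ in (\ref{2}) gives that $F(x)$ and $x$ share an eigenvalue for every $x$, and \cite[Theorem 2]{AA} (for $\mathbb{C}$-linear maps) yields the form (\ref{1}); no separate upgrade of the containment to spectral equality is needed. So the missing idea in your proposal is exactly the analytic lemma converting ``$\mathbb{C}$-linear differentials a.e.'' into global affinity along complex lines; without it, or a genuine substitute for your $L_a=L_b$ step, the argument does not go through.
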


The most difficult part of the proof of \cite[Theorem 1]{Mr} was to show that
under the assumptions of Theorem \ref{th1} the map $F$ is not far from
being \emph{holomorphic} on $\mathcal{M}_{n}$ (see the proof of \cite[Lemma 4%
]{Mr}), and this heavily depends on the statement of Lemma \ref{l1}. The
following is a generalization of \cite[Lemma 3]{Mr}.

\begin{lem}
\label{l1a} Let $T:\mathcal{M}_{n}\rightarrow \mathcal{M}_{n}$ be an $%
\mathbb{R} $-linear mapping such that $T\left( x\right) $ and $x$ always have at least one common eigenvalue. Then $T$ is $\mathbb{C}$-linear.
\end{lem}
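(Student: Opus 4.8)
The plan is to split $T$ into its complex-linear and conjugate-linear parts and to show that the conjugate-linear part vanishes. Writing $T_{1}(x)=\tfrac12\bigl(T(x)-iT(ix)\bigr)$ and $T_{2}(x)=\tfrac12\bigl(T(x)+iT(ix)\bigr)$, one gets $T=T_{1}+T_{2}$ with $T_{1}$ being $\mathbb{C}$-linear and $T_{2}$ conjugate-linear, and $T$ is $\mathbb{C}$-linear if and only if $T_{2}=0$. So the whole problem reduces to proving $T_{2}=0$.

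First I would complexify the hypothesis along complex lines. For fixed $x,y\in\mathcal{M}_{n}$ and $\lambda\in\mathbb{C}$ one has $T(x+\lambda y)=T(x)+\lambda T_{1}(y)+\overline{\lambda}\,T_{2}(y)$, so the assertion that $x+\lambda y$ and $T(x+\lambda y)$ share an eigenvalue is equivalent to the vanishing of the resultant of their characteristic polynomials, which is a polynomial $R(\lambda,\overline{\lambda})$. Since $R$ vanishes for every $\lambda\in\mathbb{C}$, a Fourier and degree argument shows it vanishes identically as a polynomial in the two independent variables $\lambda$ and $\overline{\lambda}$; replacing $\overline{\lambda}$ by an independent parameter and specializing $x=0$ then yields the decoupled condition $(\star)$: for every $b\in\mathcal{M}_{n}$ and every $s\in\mathbb{C}$, the matrices $b$ and $T_{1}(b)+sT_{2}(b)$ have a common eigenvalue. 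Writing $\sigma(b)=\{\alpha_{1},\dots,\alpha_{k}\}$, the product $\prod_{j}\det\bigl(T_{1}(b)+sT_{2}(b)-\alpha_{j}I\bigr)$ vanishes for all $s$, hence, the polynomial ring being a domain, some factor vanishes identically: for each $b$ there is $\alpha_{b}\in\sigma(b)$ with $T_{1}(b)-\alpha_{b}I+sT_{2}(b)$ singular for all $s\in\mathbb{C}$. Two consequences are immediate: $T_{2}(b)$ is always singular (the top coefficient in $s$), and $T_{1}(b)$ shares an eigenvalue with $b$ (the value $s=0$).

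Next I would feed structured matrices into this singular-pencil condition. Taking $b$ nilpotent forces $\alpha_{b}=0$, so $T_{1}(b)+sT_{2}(b)$ is a singular pencil for every nilpotent $b$; taking $b=\zeta I$ forces $\alpha_{b}=\zeta$ and, after absorbing $\overline{\zeta}\,T_{2}(I)$ into the image $V$ of $T_{2}$ (a complex subspace, since $T_{2}$ is conjugate-linear), shows that $\zeta\bigl(T_{1}(I)-I\bigr)+V$ consists of singular matrices for every $\zeta$. Using the $\mathbb{R}$-linearity of $T_{1}$ and $T_{2}$ to move $b$ through a spanning family (nilpotents span the trace-zero matrices, and rank-one matrices are also at hand), the aim is to propagate these pencil relations into the identity $T_{2}\equiv0$.

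The hard part will be exactly this last step: upgrading ``$T_{1}(b)-\alpha_{b}I+sT_{2}(b)$ is a singular pencil for every $b$'' to ``$T_{2}=0$''. A nonzero conjugate-linear map whose values are merely singular can certainly exist, so one must exploit the pencil structure together with the alignment of $\alpha_{b}$ to $\sigma(b)$, presumably via Kronecker's classification of singular pencils or Flanders-type bounds on linear subspaces of singular matrices, combined with a genericity argument showing that if $T_{2}\neq0$ one can choose a nilpotent $b$ for which $\det\bigl(T_{1}(b)+sT_{2}(b)\bigr)\not\equiv0$, a contradiction. This is where the weakening from equal spectra to a single common eigenvalue costs the most, since all multiplicity and counting information is lost and every conclusion must be wrung out of the one aligned eigenvalue $\alpha_{b}$.
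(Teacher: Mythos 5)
Your set-up coincides with the paper's: the splitting $T=T_{1}+T_{2}$ into $\mathbb{C}$-linear and conjugate-linear parts is exactly the paper's $R$ and $S$, and your resultant/two-variable polynomial argument is the same device the paper uses (rotating by $e^{ir}$ and extending $\det P(R(x)+\zeta S(x))=0$ from $|\zeta|=1$ to all $\zeta\in\mathbb{C}$). The conclusions you correctly extract --- that $T_{1}(b)$ and $b$ always share an eigenvalue, and that $T_{1}(b)-\alpha_{b}I+sT_{2}(b)$ is singular for all $s$ --- are where the proposal stops, and that is precisely where the lemma's real content begins. Your plan for the remaining step (``presumably via Kronecker's classification of singular pencils or Flanders-type bounds, combined with a genericity argument'') is not an argument, and it is doubtful it can be made into one in that form: as you yourself note, nonzero conjugate-linear maps with everywhere-singular values exist, and the only extra leverage you have, the alignment $\alpha_{b}\in\sigma(b)$, is very hard to exploit before $T_{1}$ has been identified. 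In particular there is no reason a nilpotent $b$ with $\det\bigl(T_{1}(b)+sT_{2}(b)\bigr)\not\equiv0$ should exist when $T_{2}\neq0$, since $T_{1}$ and $T_{2}$ interact inside the determinant.

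The paper closes the gap quite differently, and the missing ingredients are substantial. First it applies the Akbari--Aryapoor theorem (a $\mathbb{C}$-linear map such that $T(x)$ and $x$ always have a common eigenvalue is $x\mapsto uxu^{-1}$ or $x\mapsto ux^{t}u^{-1}$) to your $T_{1}$, which allows the normalization $T_{1}=\mathrm{id}$, i.e.\ $T(x)+T(ix)/i=2x$. Even after this normalization, killing $T_{2}$ requires a second argument: the paper introduces $W(x)=T\bigl((x+x^{\ast})/2\bigr)+iT\bigl((x-x^{\ast})/(2i)\bigr)$, which is $\mathbb{C}$-linear and agrees with $T$ on Hermitian matrices, invokes \cite[Corollary 3]{AA} and Marcus--Moyls to put $W$ in standard form, deduces $T(x)=x+(ux^{\ast}u^{-1}-x^{\ast})$, feeds this back into the pencil identity at $\zeta=1/2$, and then uses \cite[Corollary 3]{AA} again together with Lemma~\ref{l2} (if $x\mapsto(x+uxu^{-1})/2$ is a Jordan morphism then $u$ is central) to force $T=\mathrm{id}$; the transpose case for $W$ is excluded by a trace computation plus Lemma~\ref{l2}. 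None of this machinery, nor any substitute for it, appears in your proposal, so as it stands the proof is incomplete at its decisive step.
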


With the same basic idea as the one from the proof of \cite[Theorem 1]{Mr},
but with the use of Lemma \ref{l1a} instead of Lemma \ref{l1}, we obtain the
following generalization of Theorem \ref{th1}.

\begin{thm}
\label{th2} Let $F:\mathcal{M}_{n}\rightarrow \mathcal{M}_{n}$ be a
Lipschitz mapping with $F\left( 0\right) =0$ such that 
\begin{equation}
\sigma \left( F\left( x\right) -F\left( y\right) \right) \bigcap \sigma
\left( x-y\right) \neq \varnothing \qquad \left( x,y\in \mathcal{M}%
_{n}\right) .  \label{3}
\end{equation}%
Then $F$ is of the form (\ref{1}).
\end{thm}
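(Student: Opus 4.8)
The plan is to follow the architecture of Mr\v{c}un's proof of Theorem~\ref{th1}, feeding in Lemma~\ref{l1a} at the one point where linear information enters, and then to globalize. First I would linearize (\ref{3}) infinitesimally. Since $F$ is Lipschitz, Rademacher's theorem provides a full-measure set of points $a$ at which $F$ is Fr\'echet differentiable; write $T_a=DF(a)$ for the ($\mathbb R$-linear) differential. Fixing such an $a$ and a direction $h$ and applying (\ref{3}) with $x=a+th$ and $y=a$, for each real $t>0$ there is $\mu_t\in\sigma(h)$ with $t\mu_t\in\sigma(F(a+th)-F(a))$, hence $\mu_t\in\sigma\big(t^{-1}(F(a+th)-F(a))\big)$. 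As $t\to 0^{+}$ these matrices converge to $T_a h$ while the $\mu_t$ remain in the fixed finite set $\sigma(h)$; extracting a convergent subsequence and using that eigenvalues vary continuously (they are the roots of the characteristic polynomial) yields a common value $\mu_\ast\in\sigma(h)\cap\sigma(T_a h)$. Thus every differential $T_a$ satisfies the hypothesis of Lemma~\ref{l1a} and is therefore $\mathbb C$-linear.

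The step I expect to be the crux is the same one singled out by the authors for \cite[Lemma~4]{Mr}: promoting the almost-everywhere assertion ``$DF$ is $\mathbb C$-linear'' to the global assertion ``$F$ is holomorphic''. The $\mathbb C$-linearity of $T_a$ says precisely that all the Wirtinger derivatives $\partial F/\partial\overline z_k$ in the $n^2$ complex coordinates of $\mathcal M_n$ vanish at $a$. Because $F$ is Lipschitz it belongs to $W^{1,\infty}_{\mathrm{loc}}$, so its pointwise a.e.\ derivatives agree with its distributional ones and $\overline\partial F=0$ holds in the distributional sense; the ellipticity of $\overline\partial$ (Weyl's lemma) then forces the continuous map $F$ to be holomorphic on all of $\mathcal M_n$. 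Getting this regularity step exactly right is where the difficulty lies.

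Once $F$ is entire I would exploit its growth. From $F(0)=0$ and the Lipschitz estimate one has $\|F(x)\|\le L\|x\|$, so restricting to a complex line $\zeta\mapsto F(\zeta h)$ and applying Liouville's theorem to each matrix entry divided by $\zeta$ gives $F(\zeta h)=\zeta F(h)$; being holomorphic and homogeneous of degree one, $F$ is $\mathbb C$-linear. Taking $y=0$ in (\ref{3}) now shows this linear $F$ satisfies $\sigma(F(x))\cap\sigma(x)\neq\varnothing$ for all $x$, and only a linear preserver question remains. To resolve it I would fix $x,y$ and examine the resultant $R(t)=\operatorname{Res}_\lambda\big(\det(\lambda-(x+ty)),\det(\lambda-(F(x)+tF(y)))\big)$: it is a polynomial in $t$ vanishing at every $t\in\mathbb C$, hence identically zero, which forces the two characteristic polynomials to share a factor and hence a persistent common eigenvalue along each line; varying the line upgrades this to the equality $\sigma(F(x))=\sigma(x)$, after which the Marcus--Moyls theorem delivers the form (\ref{1}).
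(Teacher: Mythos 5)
Your first two stages are sound, and in the middle you take a legitimately different route from the paper: where the paper invokes Mr\v{c}un's Lemma \ref{l4} to get that $F$ is affine along complex lines and hence $\mathbb{C}$-linear, you pass from the a.e.\ $\mathbb{C}$-linearity of the differential (obtained, exactly as in the paper, from Rademacher, the limit $t\to 0^{+}$ with the common eigenvalue trapped in the finite set $\sigma(h)$, and Lemma \ref{l1a}) to the vanishing of the distributional $\overline{\partial}F$, then use elliptic regularity together with the global Lipschitz bound and Liouville to conclude that $F$ is $\mathbb{C}$-linear. That alternative globalization is correct and is a reasonable substitute for Lemma \ref{l4}.

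The gap is in your final step. The paper concludes by citing \cite[Theorem 2]{AA}: a $\mathbb{C}$-linear map $T$ with $\sigma(T(x))\cap\sigma(x)\neq\varnothing$ for all $x$ is necessarily of the form (\ref{1}); this is a genuine theorem proved in that reference, not a quick consequence of the hypothesis. Your resultant argument does not replace it. The resultant $R(t)$ of the characteristic polynomials of $x+ty$ and $F(x)+tF(y)$ vanishes at each $t$ precisely because the two matrices share an eigenvalue there, so $R\equiv 0$ is little more than a restatement of the hypothesis along the line. It does give (both polynomials being monic in $\lambda$ over $\mathbb{C}[t]$) a common factor $g(t,\lambda)$ of positive degree in $\lambda$; but that factor may have degree one, and evaluating it at $t=0$ returns exactly one common eigenvalue of $x$ and $F(x)$ --- which is what you already knew. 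The claim that ``varying the line upgrades this to $\sigma(F(x))=\sigma(x)$'' is precisely the missing argument: nothing in the computation controls the remaining eigenvalues of $F(x)$, since $F(y)$ is a priori unrelated to $y$. To finish you should either quote \cite[Theorem 2]{AA} (as the paper does; results from the same reference are already used in the proof of Lemma \ref{l1a}), or supply an actual proof that a linear map sharing at least one eigenvalue with every matrix preserves the full spectrum, after which Marcus--Moyls \cite[Theorem 6]{MM} applies.
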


The idea to impose certain spectral properties on $F$ in order to make it
holomorphic also works in the case of $\mathcal{C}^{1}$-functions. Motivated
by considerations related to the study of biholomorphic maps of the open
spectral unit ball of $\mathcal{M}_{n}$, Baribeau and Ransford proved that
if $U$ and $V$ are open subsets of $\mathcal{M}_{n}$ and $F:U\rightarrow V$
is a bijective function of class $\mathcal{C}^{1}$ such that $\sigma
(F(x))=\sigma \left( x\right) $ for every $x\in U$, then $F(x)$ is conjugate
to $x$ for all $x\in U$ \cite[Corollary 1.2]{BR}. For example, this
statement holds when $F$ is a bijective \emph{holomorphic}
spectrum-preserving map. When $F$ is supposed to be holomorphic, we may ask $%
F$ to preserve only one eigenvalue, in order to obtain the same result: it
was proved in \cite[Corollary 1.2]{CR} that if $U\subseteq \mathcal{M}_{n}$
is a domain containing the null matrix, $V\subseteq \mathcal{M}_{n}$ is open
and $F:U\rightarrow V$ is a bijective holomorphic map such that $F\left(
x\right) $ and $x$ always have at least one common eigenvalue, then $F\left( x\right) $
and $x$ are conjugate for all $x\in U$. We use this to prove a statement analogous to the one of Theorem \ref{th2} in the context of $\mathcal{C}^{1}$
functions.

\begin{thm}
\label{th4} Let $U\subseteq \mathcal{M}_{n}$ be a domain containing the null
matrix and $F:U\rightarrow \mathcal{M}_{n}$ a function of class $\mathcal{C}%
^{1}$ with $F\left( 0\right) =0.$ If 
\begin{equation}
\sigma \left( F\left( x\right) -F\left( y\right) \right) \bigcap \sigma
\left( x-y\right) \neq \varnothing \qquad \left( x,y\in U\right) ,
\label{3_1}
\end{equation}%
then $F$ is of the form (\ref{1}).
\end{thm}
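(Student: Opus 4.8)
The plan is to first boost the $\mathcal{C}^{1}$ regularity of $F$ to holomorphy, then to localize and invoke \cite[Corollary 1.2]{CR}, and finally to promote the resulting conjugacy to the rigid form (\ref{1}). First I would show that every differential $dF_{a}$, $a\in U$, is $\mathbb{C}$-linear. Fix $a$ and $v\in\mathcal{M}_{n}$; for small $t>0$ the hypothesis (\ref{3_1}) applied to $x=a+tv$ and $y=a$ yields a common eigenvalue of $F(a+tv)-F(a)$ and $tv$, so after dividing by $t$ there is $\lambda_{t}\in\sigma\left(t^{-1}(F(a+tv)-F(a))\right)\cap\sigma(v)$. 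As $\sigma(v)$ is finite, along some sequence $t_{k}\downarrow 0$ the value $\lambda_{t_{k}}=\lambda$ is constant; since $t^{-1}(F(a+tv)-F(a))\to dF_{a}(v)$ and $z\mapsto\det(\lambda I-z)$ is continuous, we get $\lambda\in\sigma(dF_{a}(v))\cap\sigma(v)$. Thus the $\mathbb{R}$-linear map $dF_{a}$ shares an eigenvalue with every matrix, hence is $\mathbb{C}$-linear by Lemma \ref{l1a}. A $\mathcal{C}^{1}$ map whose differential is everywhere $\mathbb{C}$-linear satisfies the Cauchy--Riemann equations, so $F$ is holomorphic on $U$.

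Next I would make $F$ locally bijective in order to apply \cite[Corollary 1.2]{CR}. For this one needs each $dF_{a}$ to be invertible, which I would deduce from the linear fact underlying Lemma \ref{l1a} and \cite[Theorem 6]{MM}: a $\mathbb{C}$-linear map sharing an eigenvalue with every matrix is automatically spectrum-preserving, hence of the form (\ref{1}) and invertible. Granting this, $F$ is a local biholomorphism at $0$ and maps some ball $B\ni 0$ biholomorphically onto an open set $V$; since (\ref{3_1}) with $y=0$ supplies the one-common-eigenvalue hypothesis on $B$, \cite[Corollary 1.2]{CR} shows that $F(x)$ is conjugate to $x$ for all $x\in B$. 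Applying the same argument to the translates $H_{y}(z):=F(y+z)-F(y)$, which are holomorphic, vanish at $0$, have invertible differential $dF_{y}$, and inherit $\sigma(H_{y}(z))\cap\sigma(z)\neq\varnothing$ from (\ref{3_1}), I would conclude that $F(x)-F(y)$ is conjugate to $x-y$ for all nearby $x,y$; in particular the characteristic polynomials of differences are preserved.

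Finally I would convert this conjugacy into the rigid form (\ref{1}). Conjugacy of differences gives $\operatorname{tr}((F(x)-F(y))^{k})=\operatorname{tr}((x-y)^{k})$ for all $k$ and all nearby $x,y$; taking $k=1,2$ and using $\operatorname{tr}(F(x)^{2})=\operatorname{tr}(x^{2})$ produces the bilinear identity $\operatorname{tr}(F(x)F(y))=\operatorname{tr}(xy)$ on $B\times B$. Differentiating it in $x$ along $v$ and in $y$ along $w$ gives $\operatorname{tr}(dF_{x}(v)\,dF_{y}(w))=\operatorname{tr}(vw)$ for all $v,w$; since the trace form is nondegenerate, this pins $dF_{y}$ down as the inverse adjoint of $dF_{x}$ for every pair $x,y\in B$, forcing $dF$ to be constant and equal to $dF_{0}$. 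Hence $F(x)=dF_{0}(x)$ on $B$. Scaling the conjugacy $\sigma(F(tv))=\sigma(tv)$ and letting $t\to 0$ gives $\sigma(dF_{0}(v))=\sigma(v)$, so $dF_{0}$ is a $\mathbb{C}$-linear spectrum preserver and is of the form (\ref{1}) by \cite[Theorem 6]{MM}. As $U$ is a domain and $F$ is holomorphic, the equality of $F$ with this form-(\ref{1}) map propagates from $B$ to all of $U$ by analytic continuation.

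The main obstacle is the invertibility of the differential claimed in the second step: without it neither the inverse function theorem nor \cite[Corollary 1.2]{CR} can be applied, and the whole localization collapses. This rests on the purely linear statement that sharing a single eigenvalue with every matrix already forces a $\mathbb{C}$-linear map to preserve the entire spectrum --- the linear engine whose $\mathbb{R}$-linear half is recorded in Lemma \ref{l1a}. Once this is in hand, the remaining steps are only limiting arguments, the cited conjugacy theorem, and a single trace computation.
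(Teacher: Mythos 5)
Your proposal is correct and follows essentially the same route as the paper: pointwise $\mathbb{C}$-linearity of the differentials via Lemma \ref{l1a}, holomorphy, invertibility of $\left( DF\right)_{0}$ from the linear result of Akbari--Aryapoor, localization by the inverse mapping theorem, conjugacy of differences via \cite[Corollary 1.2]{CR} applied to the translates, and a trace identity $\hbox{Tr}(F(x)F(y))=\hbox{Tr}(xy)$ to force rigidity. The only (inessential) divergence is the endgame: the paper normalizes $G=u^{-1}Fu$ so that $G^{\prime}(0)$ is the identity and compares coefficients of $\lambda$ in $\hbox{Tr}(G(x)G(\lambda y))=\lambda\hbox{Tr}(xy)$, whereas you differentiate the trace identity in both variables to make $DF$ locally constant and then identify the resulting linear map by \cite[Theorem 6]{MM}; both are sound, and you should only take care (as the paper does by working inside the ball $W$ where $F$ is injective) that the conjugacy $F(x)-F(y)\sim x-y$ holds on a product of balls of a uniform radius before differentiating.
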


For $x\in \mathcal{M}_{n}$, denote by $\rho \left(x\right) $ its spectral radius. It was proved in \cite[Corollary 1.4]{CR} that if $U\subseteq \mathcal{M}_{n}$
is a domain containing the null matrix, $V\subseteq \mathcal{M}_{n}$ is open
and $F:U\rightarrow V$ is a bijective holomorphic map such that $F\left(
x\right) $ and $x$ always have the same spectral radius, then there exists a
complex number $\gamma $ of modulus one such that $\gamma ^{-1}F\left(
x\right) $ and $x$ are conjugate for all $x\in U$. We use this to prove the
following version of Theorem \ref{th4}.

\begin{thm}
\label{th5} Let $U\subseteq \mathcal{M}_{n}$ be a domain containing the null
matrix and $F:U\rightarrow \mathcal{M}_{n}$ a function of class $\mathcal{C}%
^{1}$ with $F\left( 0\right) =0.$ If 
\begin{equation}
\rho \left( F\left( x\right) -F\left( y\right) \right) =\rho \left(
x-y\right) \qquad \left( x,y\in U\right) ,  \label{3_a}
\end{equation}%
then there exists $\gamma \in \mathbb{C}$ of modulus one such that either $\gamma
^{-1}F$ or $\gamma ^{-1}\overline{F}$ is of the form (\ref{1}). (The map $%
\overline{F}:U\rightarrow \mathcal{M}_{n}$ is defined by $\overline{F}\left(
x\right) =\overline{F\left( x\right) },$ the matrix obtained from $F\left(
x\right) $ by entrywise complex conjugation.)
\end{thm}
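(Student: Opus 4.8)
The plan is to run the same scheme as for Theorem \ref{th4}: use the metric hypothesis together with the $\mathcal{C}^1$ assumption to force $F$ to be holomorphic (or anti-holomorphic), and then feed this into the holomorphic spectral-radius result \cite[Corollary 1.4]{CR}. The first move is to differentiate (\ref{3_a}). Fix $a\in U$ and $h\in\mathcal{M}_n$, set $x=a+th$ and $y=a$ for small real $t>0$, and use $\rho(th)=t\rho(h)$ to obtain $\rho\big(t^{-1}(F(a+th)-F(a))\big)=\rho(h)$. Since $F$ is of class $\mathcal{C}^1$ the difference quotient converges to the (real) differential $T:=DF(a)$ applied to $h$, and as $\rho$ is continuous we get $\rho(T(h))=\rho(h)$ for every $h$. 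Thus $DF(a)$ is an $\mathbb{R}$-linear map preserving the spectral radius.

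The heart of the matter, and the step I expect to be the main obstacle, is the linear statement that \emph{an $\mathbb{R}$-linear $T:\mathcal{M}_n\to\mathcal{M}_n$ with $\rho(T(h))=\rho(h)$ for all $h$ is either $\mathbb{C}$-linear or conjugate-linear}. This is the spectral-radius counterpart of Lemma \ref{l1a}, but genuinely harder: because $\rho(h)=\rho(\overline h)$, the rigidity present in the eigenvalue setting is lost and the conjugate-linear alternative must be allowed. The mechanism I would exploit is that, writing $P=T(h)$ and $Q=T(ih)$, $\mathbb{R}$-linearity gives $T(e^{i\theta}h)=\cos\theta\,P+\sin\theta\,Q$, so $\rho(\cos\theta\,P+\sin\theta\,Q)$ is constant in $\theta$ (equal to $\rho(h)=\rho(P)=\rho(Q)$); one must then show that running this over all $h$, together with the fact that $T$ preserves the set $\{\rho=0\}$ of nilpotents, forces $Q=iP$ for every $h$, or $Q=-iP$ for every $h$. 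Granting this dichotomy, the holomorphic and anti-holomorphic loci are disjoint closed subsets of $U$ covering it (they are disjoint because $DF(a)\neq 0$), so by connectedness exactly one of them is all of $U$; and since $\overline F$ also satisfies (\ref{3_a}), as $\rho(\overline A)=\rho(A)$ and $\overline F(x)-\overline F(y)=\overline{F(x)-F(y)}$, I may assume after replacing $F$ by $\overline F$ if necessary that $F$ is holomorphic on $U$.

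It remains to identify the holomorphic $F$. Taking $y=0$ in (\ref{3_a}) gives $\rho(F(x))=\rho(x)$ on $U$. To invoke \cite[Corollary 1.4]{CR} I first check that $T:=DF(0)$ is invertible: if $h\neq0$ lay in its kernel then, since $T$ preserves $\rho$, we would have $\rho(y+h)=\rho(T(y+h))=\rho(T(y))=\rho(y)$ for every $y$, which is false once $y$ is chosen to be a nilpotent with $y+h$ not nilpotent. Hence by the inverse mapping theorem $F$ is a biholomorphism of a ball $B\ni 0$ onto an open set, and \cite[Corollary 1.4]{CR} yields $\gamma$ with $|\gamma|=1$ such that $\gamma^{-1}F(x)$ is similar to $x$ for $x\in B$; in particular the coefficients of the characteristic polynomials of $\gamma^{-1}F(x)$ and of $x$ agree on $B$. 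These coefficients are holomorphic on all of $U$ and agree on the open subset $B$, so by the identity theorem the agreement propagates to the whole connected domain $U$.

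Finally, exactly as in the passage from \cite[Corollary 1.2]{CR} to Theorem \ref{th4}, I would linearize. The scaling argument applied to $G:=\gamma^{-1}F$ (using that $G(tx)$ and $tx$ have the same characteristic polynomial for small $t$) shows that $DG(0)=\gamma^{-1}DF(0)$ is a $\mathbb{C}$-linear map preserving characteristic polynomials, hence a spectrum preserver, hence of the form (\ref{1}) by Marcus--Moyls \cite[Theorem 6]{MM}. Normalizing $DG(0)$ to the identity by a conjugation (and a transposition in the second case), and then using that a holomorphic map fixing $0$ whose differential at $0$ is the identity and which has the same characteristic polynomial as $x$ at every point must coincide with $x$, I conclude that $\gamma^{-1}F$, or $\gamma^{-1}\overline F$ in the anti-holomorphic case, is of the form (\ref{1}).
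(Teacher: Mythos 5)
Your overall architecture is the paper's: differentiate (\ref{3_a}) to get that every real differential preserves the spectral radius, split $U$ into the loci where $\left( DF\right)_{x}$ is $\mathbb{C}$-linear and where it is conjugate-linear, use closedness, disjointness and connectedness to reduce to the holomorphic case, and then bring in \cite[Corollary 1.4]{CR}. Two remarks before the main point. The dichotomy you flag as the principal obstacle (an $\mathbb{R}$-linear spectral-radius preserver of $\mathcal{M}_{n}$ is either $\mathbb{C}$-linear or conjugate-linear) is precisely Corollary \ref{c1} of this paper (a special case of \cite[Theorem 1.1]{BSS}), so you may cite it instead of ``granting'' it; your sketch of it is not a proof, but this is not where the argument breaks.

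The genuine gap is in your last paragraph. The rigidity statement you invoke --- that a holomorphic map $G$ with $G(0)=0$, $G^{\prime }(0)=\mathrm{id}$, and $G(x)$ having the same characteristic polynomial as $x$ for every $x$, must coincide with the identity --- is false. For example, with $n=2$, $\phi (x)=x_{21}$ and $N=E_{12}$, the map $G(x)=e^{\phi (x)N}xe^{-\phi (x)N}=(I+x_{21}E_{12})x(I-x_{21}E_{12})$ is holomorphic, fixes $0$, has differential the identity at $0$ (the correction terms are quadratic in the entries of $x$), and $G(x)\sim x$ everywhere, yet $G(E_{21})\neq E_{21}$. So the pointwise similarity $\gamma ^{-1}F(x)\sim x$, which you correctly propagate from the ball $B$ to all of $U$ by the identity theorem applied to the coefficients of the characteristic polynomial, is not sufficient to conclude. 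What the trace argument at the end of the proof of Theorem \ref{th4} actually uses is the similarity of \emph{differences}, $G(x+h)-G(x)\sim h$ for small $x,h$ (this is what yields $\hbox{Tr}((G(x)-G(y))^{2})=\hbox{Tr}((x-y)^{2})$ and then $\hbox{Tr}(G(x)G(y))=\hbox{Tr}(xy)$). In the spectral-radius setting this does not come for free: applying \cite[Corollary 1.4]{CR} to $h\mapsto G(x+h)-G(x)$ only gives $G(x+h)-G(x)\sim \xi _{x}h$ for some unimodular constant $\xi _{x}$ depending on $x$, and an extra step is needed to show $\xi _{x}=1$ (the paper first gets the constant for $x\mapsto G(x)$ equal to $1$ by differentiating $G(\lambda I_{n})=\xi \lambda I_{n}$ at $0$, and then pins down $\xi _{x}$ by evaluating at $h=(\delta /4)I_{n}$ and taking traces). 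Your proposal never establishes this difference similarity, so the final linearization cannot be carried out as written; once it is established, the conclusion does follow exactly as in Theorem \ref{th4}.
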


\section{Proof of Lemma \protect\ref{l1a}}

We shall need the following lemma.

\begin{lem}
\label{l2} Let $u\in \mathcal{M}_{n}$ be an invertible matrix such that $K:%
\mathcal{M}_{n}\rightarrow \mathcal{M}_{n}$ given by%
\begin{equation}
K\left( x\right) =\frac{x+uxu^{-1}}{2}\qquad \left( x\in \mathcal{M}%
_{n}\right)  \label{4}
\end{equation}%
satisfies $K\left( x^{2}\right) =K\left( x\right) ^{2}$ for all $x$. Then $K$
is the identity on $\mathcal{M}_{n}$.
\end{lem}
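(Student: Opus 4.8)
The plan is to reduce the functional equation $K(x^{2})=K(x)^{2}$ to a purely algebraic condition on the automorphism $\phi(x):=uxu^{-1}$, and then to show that this condition forces $\phi$ to be the identity. Writing $K=\tfrac12(\mathrm{id}+\phi)$ and using that $\phi$ is multiplicative, a direct expansion gives $K(x^{2})-K(x)^{2}=\tfrac14\bigl(x-\phi(x)\bigr)^{2}$, because the cross terms $x\phi(x)+\phi(x)x$ reassemble into a perfect square. Hence the hypothesis is equivalent to $D(x)^{2}=0$ for all $x$, where $D:=\mathrm{id}-\phi$, i.e. $D(x)=x-uxu^{-1}$. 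The goal then becomes to prove $D\equiv 0$, equivalently $uxu^{-1}=x$ for every $x$ (so that $u$ is a scalar matrix); this is precisely the assertion that $K$ is the identity.

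First I would exploit the linearity of $D$. Polarizing $D(x)^{2}=0$ yields $D(x)D(y)+D(y)D(x)=0$ for all $x,y$, and taking traces, together with the nondegeneracy of the form $(a,b)\mapsto\operatorname{tr}(ab)$ on $\mathcal{M}_{n}$, turns this into the operator identity $\phi+\phi^{-1}=2\,\mathrm{id}$, that is $(\mathrm{id}-\phi)^{2}=0$ as an endomorphism of $\mathcal{M}_{n}$. Since $\phi$ acts by conjugation and has spectrum $\{\mu_{i}/\mu_{j}\}$, where the $\mu_{i}$ are the eigenvalues of $u$, the nilpotency of $\mathrm{id}-\phi$ forces every ratio $\mu_{i}/\mu_{j}$ to equal $1$; thus $u$ has a single eigenvalue $\mu$, and $u=\mu(I+N)$ with $N$ nilpotent.

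It remains to rule out $N\neq 0$. Here I would put $u$ in Jordan form and restrict attention to one Jordan block of size $m\geq 2$: since conjugation by a block-diagonal $u$ preserves each diagonal block, the condition $D(x)^{2}=0$ is inherited on that block. On it, $(\mathrm{id}-\phi)^{2}=0$ gives $\operatorname{Im}(\mathrm{id}-\phi)\subseteq\ker(\mathrm{id}-\phi)$, and since $\ker(\mathrm{id}-\phi)$ is the commutant of a single Jordan block, of dimension exactly $m$, the rank--nullity relation forces $m\geq m^{2}-m$, hence $m\leq 2$. The surviving case $m=2$ is eliminated by an explicit computation: for $u=\mu I+E_{12}$ in $\mathcal{M}_{2}$ one finds $D(E_{21})^{2}=\mu^{-2}I\neq 0$, contradicting the inherited condition. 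Therefore $N=0$, so $u$ is scalar, $\phi=\mathrm{id}$, and $K$ is the identity.

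The main obstacle I anticipate is exactly this last step, the passage from ``all eigenvalues of $u$ coincide'' to ``$u$ is scalar.'' The trace identity only detects the unipotency of $\phi$ and is blind to a nontrivial nilpotent part $N$; eliminating $N$ genuinely requires the full square-zero condition $D(x)^{2}=0$, not merely its trace, combined with the reduction to a single Jordan block. By contrast the diagonalizable subcase is immediate: testing on $x=E_{ij}+E_{ji}$ gives $D(x)^{2}=(1-\mu_{i}/\mu_{j})(1-\mu_{j}/\mu_{i})(E_{ii}+E_{jj})$, which vanishes only when $\mu_{i}=\mu_{j}$, so a diagonalizable $u$ satisfying the hypothesis is automatically scalar.
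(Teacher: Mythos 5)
Your proposal is correct, but it follows a genuinely different route from the paper. Both arguments start from the same expansion, namely that the hypothesis is equivalent to $\left( x-uxu^{-1}\right) ^{2}=0$ for all $x$. At that point the paper takes a very short spectral route: it observes that $\rho \left( x-uxu^{-1}\right) =0$ for all $x$, substitutes $x\mapsto xu$ to turn this into $\rho \left( xu-ux\right) =0$ for all $x$, and then invokes Aupetit's spectral characterization of the center (\cite[Theorem 5.2.1]{Au}) to conclude that $u$ is central, hence scalar. Your argument replaces this appeal to Banach-algebra machinery by elementary linear algebra: polarizing $D(x)^{2}=0$ and using the nondegeneracy of $(a,b)\mapsto \operatorname{tr}(ab)$ (together with the fact that the trace-adjoint of $\phi (x)=uxu^{-1}$ is $\phi ^{-1}$) gives the operator identity $(\mathrm{id}-\phi )^{2}=0$, so all eigenvalue ratios $\mu _{i}/\mu _{j}$ of $u$ equal $1$; the Jordan-block reduction with the rank--nullity count $m^{2}-m\leq m$ kills blocks of size $m\geq 3$, and your explicit computation $D(E_{21})^{2}=\mu ^{-2}I\neq 0$ (which checks out) eliminates $m=2$. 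The trade-off is clear: the paper's proof is two lines modulo a cited theorem valid in much greater generality, while yours is longer but entirely self-contained within $\mathcal{M}_{n}$ and makes visible exactly where the nilpotent part of $u$ is excluded --- a step the trace identity alone cannot see, as you rightly point out.
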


\begin{proof} For all $x$ we have 
\begin{eqnarray*}
\frac{x^{2}+ux^{2}u^{-1}}{2} &=&K\left( x^{2}\right) =K\left( x\right) ^{2}
\\
&=&\frac{x^{2}}{4}+\frac{xuxu^{-1}}{4}+\frac{uxu^{-1}x}{4}+\frac{ux^{2}u^{-1}%
}{4},
\end{eqnarray*}%
and therefore 
\[
\left( x-uxu^{-1}\right) ^{2}=0\qquad \left( x\in \mathcal{M}_{n}\right) .
\]
In particular, $\rho \left( x-uxu^{-1}\right) =0$ for every $x$ in $\mathcal{%
M}_{n}$. For $x\mapsto xu$ this gives 
\[
\rho \left( xu-ux\right) =0\qquad \left( x\in \mathcal{M}_{n}\right) .
\]
Using \cite[Theorem 5.2.1]{Au} we conclude that $u$ belongs to the center $%
\mathcal{Z}(\mathcal{M}_{n})$ of $\mathcal{M}_{n}$. Then (\ref{4}) implies that $%
K\left( x\right) =x$ for all $x$.
\end{proof}

\begin{proof}[Proof of Lemma \ref{l1a}] If $n=1$ then $T$ is the identity on $%
\mathbb{C}$. So suppose now that $n\geq 2$. Given $r\in \mathbb{R}$, we have 
$\sigma (T(e^{-ir}x))\cap \sigma (e^{-ir}x)\neq \varnothing $, that is $%
\sigma (e^{ir}T(e^{-ir}x))\cap \sigma (x)\neq \varnothing $. Using the $%
\mathbb{R}$-linearity of $T$, we obtain 
\begin{eqnarray*}
e^{ir}T(e^{-ir}x) &=&(\cos r+i\sin r)(T\left( x\right) \cos r-T\left(
ix\right) \sin r) \\
&=&\frac{T\left( x\right) +T\left( ix\right) /i}{2}+e^{2ir}\frac{T\left(
x\right) -T\left( ix\right) /i}{2}.
\end{eqnarray*}%
Thus 
\begin{equation}
\sigma (R\left( x\right) +\zeta S\left( x\right) )\cap \sigma (x)\neq
\varnothing \qquad (x\in \mathcal{M}_{n},\ \left\vert \zeta \right\vert =1),
\label{5}
\end{equation}%
where we denoted 
\[
R\left( x\right) =\frac{T\left( x\right) +T\left( ix\right) /i}{2}\qquad 
\mbox{and}\qquad S\left( x\right) =\frac{T\left( x\right) -T\left( ix\right)
/i}{2}
\]
for $x\in \mathcal{M}_{n}$. Since $T$ is $\mathbb{R}$-linear, the same holds
for $R$. One can easily check that $R\left( ix\right) =iR\left( x\right) $
for all $x$, and therefore $R$ is $\mathbb{C}$-linear.

For $x\in \mathcal{M}_{n}$, denoting by $P\left( \lambda \right) $ its
characteristic polynomial we use (\ref{5}) to show that $\det P\left( R\left(
x\right) +\zeta S\left( x\right) \right) =0$ for all $\zeta $ of modulus
one. Then the same holds for all complex numbers $\zeta $, and in particular 
$\det P\left( R\left( x\right) \right) =0$. This means that $R\left(
x\right) $ and $x$ always have at least one common eigenvalue. Using now \cite[
Theorem 3]{AA} we obtain that $R$ is of the form (\ref{1}). Then without
loss of generality we may suppose that $R\left( x\right) =x$ for all $x\in 
\mathcal{M}_{n}$. (If $R\left( x\right) =uxu^{-1}$ for all $x\in \mathcal{M}%
_{n}$ then we work with $x\mapsto u^{-1}T\left( x\right) u$ instead of $T$,
and if $R\left( x\right) =ux^{t}u^{-1}$ for all $x\in \mathcal{M}_{n}$ then
we work with $x\mapsto (u^{-1}T\left( x\right) u)^{t}$ instead of $T.$) This means that 
\begin{equation}
T\left( x\right) +T\left( ix\right) /i=2x\qquad \left( x\in \mathcal{M}%
_{n}\right) .  \label{6}
\end{equation}%
Define now the mapping $W:\mathcal{M}_{n}\rightarrow \mathcal{M}_{n}$ by
putting 
\[
W\left( x\right) =T\left( \frac{x+x^{\ast }}{2}\right) +iT\left( \frac{%
x-x^{\ast }}{2i}\right) \qquad \left( x\in \mathcal{M}_{n}\right) .
\]
One can easily check that $W$ is $\mathbb{C}$-linear. Moreover, for an
arbitrary Hermitian matrix $x$ we have $W\left( x\right) =T\left( x\right) $%
, and therefore $\sigma (W\left( x\right) )\cap \sigma (x)\neq \varnothing $%
. Then \cite[Corollary 3]{AA} implies that $W$ is an eigenvalue-preserver,
so by \cite[Theorem 6]{MM} there exists an invertible matrix $u$ such that 
\begin{equation}
W\left( x\right) =uxu^{-1}\qquad (x\in \mathcal{M}_{n})\qquad \mbox{or}%
\quad W\left( x\right) =ux^{t}u^{-1}\quad (x\in \mathcal{M}_{n}).
\label{7}
\end{equation}

Suppose that the first case occurs in (\ref{7}). Then $T\left( x\right)
=uxu^{-1}$ for all $x$ satisfying $x=x^{\ast }$. Given an arbitrary $x\in 
\mathcal{M}_{n}$, writing it as $x=x_{1}+ix_{2}$ with $x_{1},x_{2}$
Hermitian matrices and using (\ref{6}) we get 
\begin{eqnarray*}
T\left( x\right) &=&T\left( x_{1}\right) +T\left( ix_{2}\right)
=ux_{1}u^{-1}+i\left( 2x_{2}-T\left( x_{2}\right) \right) \\
&=&u\left( x_{1}-ix_{2}\right) u^{-1}+2ix_{2},
\end{eqnarray*}%
and therefore 
\begin{equation}
T\left( x\right) =x+\left( ux^{\ast }u^{-1}-x^{\ast }\right) \qquad (x\in 
\mathcal{M}_{n}).  \label{8}
\end{equation}%
Since $\sigma \left(
T\left( \lambda x\right) /\lambda \right) \cap \sigma \left( x\right) \neq
\varnothing $ for all $\lambda \in \mathbb{C}\backslash \{0\}$, it follows that $\sigma (x+\zeta \left( ux^{\ast }u^{-1}-x^{\ast
}\right) )\cap \sigma \left( x\right) \neq \varnothing $ for all $x\in 
\mathcal{M}_{n}$ and $\left\vert \zeta \right\vert =1$ in $\mathbb{C}$. Once
more, for $x\in \mathcal{M}_{n}$ denoting by $P\left( \lambda \right) $ its
characteristic polynomial we have $\det P(x+\zeta \left( ux^{\ast
}u^{-1}-x^{\ast }\right) )=0$ for all $\zeta \in \mathbb{C}$. Thus $\sigma
(x+\zeta \left( ux^{\ast }u^{-1}-x^{\ast }\right) )\cap \sigma \left(
x\right) \neq \varnothing $ for all $x\in \mathcal{M}_{n}$ and $\zeta \in 
\mathbb{C}$. Taking $\zeta =1/2$ we obtain $\sigma (x+\left( ux^{\ast
}u^{-1}-x^{\ast }\right) /2)\cap \sigma \left( x\right) \neq \varnothing $
for all $x\in \mathcal{M}_{n}$. Define $K:\mathcal{M}_{n}\rightarrow 
\mathcal{M}_{n}$ by putting%
\begin{eqnarray*}
K\left( x\right) &=&x+\left( uxu^{-1}-x\right) /2 \\
&=&(x+uxu^{-1})/2.
\end{eqnarray*}%
Then $K$ is $\mathbb{C}$-linear and $\sigma \left( K\left( x\right) \right)
\cap \sigma \left( x\right) \neq \varnothing $ for all $x\in \mathcal{M}_{n}$
satisfying $x=x^{\ast }$. Using once more \cite[Corollary 3]{AA}, we obtain that $K$
is a Jordan morphism. Then Lemma \ref{l2} implies that $K\left( x\right) =x$
for all $x$. Then $uxu^{-1}=x$ for all $x$, and (\ref{8}) gives $T\left(
x\right) =x$ for all $x$. In particular, $T$ is $\mathbb{C}$-linear.

Let us now prove that the second case in (\ref{7}) cannot occur. If $W\left(
x\right) =ux^{t}u^{-1}$ for all $x$, then exactly as in the proof of (\ref{8}%
) we obtain that $T\left( x\right) =x+\left( u\overline{x}u^{-1}-x^{\ast
}\right) $ for all $x$, where $\overline{x}$ is the matrix obtained from $x$
by entrywise conjugation. Once more, $\sigma \left( T\left( \lambda
x\right) /\lambda \right) \cap \sigma \left( x\right) \neq \varnothing $ for
all nonzero $\lambda $ gives $\sigma (x+\left( u\overline{x}u^{-1}-x^{\ast
}\right) /2)\cap \sigma \left( x\right) \neq \varnothing $ for all $x\in 
\mathcal{M}_{n}$. Define $J:\mathcal{M}_{n}\rightarrow \mathcal{M}_{n}$ by
putting
\begin{eqnarray*}
J\left( x\right)  &=&x+\left( ux^{t}u^{-1}-x\right) /2 \\
&=&(x+ux^{t}u^{-1})/2.
\end{eqnarray*}%
Then $J$ is $\mathbb{C}$-linear and $\sigma \left( J\left( x\right) \right)
\cap \sigma \left( x\right) \neq \varnothing $ for all $x\in \mathcal{M}_{n}$
satisfying $x=x^{\ast }$. It follows by \cite[Corollary 3]{AA} that $J$ is
either a morphism or an antimorphism. In both cases we have $\hbox{Tr}%
J\left( xy\right) =\hbox{Tr}(J\left( x\right) J\left( y\right) )$ for all $%
x,y\in \mathcal{M}_{n}$, where $\hbox{Tr}(\cdot )$ denotes the usual trace
on $\mathcal{M}_{n}$. Using the properties of the trace, this gives%
\[
\hbox{Tr}(x\left( 2y-uy^{t}u^{-1}-u^{t}y^{t}(u^{t})^{-1}\right) )=0\qquad
(x,y\in \mathcal{M}_{n}).
\]
It follows that $y=(uy^{t}u^{-1}+u^{t}y^{t}(u^{t})^{-1})/2$ for all $y\in \mathcal{%
M}_{n}$, which can be rewritten as 
\[
u^{-1}z^{t}u=\frac{z+(u^{-1}u^{t})z(u^{-1}u^{t})^{-1}}{2}\qquad (z\in 
\mathcal{M}_{n}).
\]
Since $z\mapsto u^{-1}z^{t}u$ is a Jordan morphism on $\mathcal{M}_{n}$, using
Lemma \ref{l2} we obtain that $u^{-1}z^{t}u=z$ for all $z$ in $\mathcal{M}_{n}$. Then $ab=(u^{-1}a^{t}u)(u^{-1}b^{t}u)=u^{-1}\left( ba\right) ^{t}u=ba$ for all $a,b\in \mathcal{M}_{n}$, and we arrive
at a contradiction.
\end{proof}

We now use Lemma \ref{l1a} to give a new proof of a special case of a
result of Bhatia, \v{S}emrl and Sourour on maps on matrices that preserve
the spectral radius distance \cite[Theorem 1.1]{BSS}.

\begin{cor}
\label{c1} Let $T:\mathcal{M}_{n}\rightarrow \mathcal{M}_{n}$ be an $\mathbb{%
R}$-linear mapping such that $\rho (T\left( x\right) )=\rho (x)$ for all $x$%
. Then either $T$ or $\overline{T}$ is $\mathbb{C}$-linear.
\end{cor}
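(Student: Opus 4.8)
The plan is to reduce the problem to Lemma~\ref{l1a} by isolating the complex-linear and conjugate-linear parts of $T$. Write $T=R+S$, where
\[
R\left( x\right) =\frac{T\left( x\right) +T\left( ix\right) /i}{2}\qquad \mbox{and}\qquad S\left( x\right) =\frac{T\left( x\right) -T\left( ix\right) /i}{2},
\]
so that, exactly as in the proof of Lemma~\ref{l1a}, $R$ is $\mathbb{C}$-linear and $S$ is conjugate-linear (that is, $S\left( \lambda x\right) =\overline{\lambda }S\left( x\right) $). Since $\overline{T}=\overline{R}+\overline{S}$ with $\overline{S}$ complex-linear and $\overline{R}$ conjugate-linear, the assertion that $T$ or $\overline{T}$ is $\mathbb{C}$-linear is exactly the dichotomy $S\equiv 0$ or $R\equiv 0$, and this is what I would aim to prove.

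First I would record the rotated spectral-radius identity. Arguing as in the derivation of (\ref{5}), but replacing the common-eigenvalue relation by the rotation invariance $\rho \left( \zeta y\right) =\rho \left( y\right) $, the hypothesis $\rho \left( T\left( e^{-ir}x\right) \right) =\rho \left( e^{-ir}x\right) =\rho \left( x\right) $ together with $e^{ir}T\left( e^{-ir}x\right) =R\left( x\right) +e^{2ir}S\left( x\right) $ yields
\[
\rho \left( R\left( x\right) +\zeta S\left( x\right) \right) =\rho \left( x\right) \qquad \left( x\in \mathcal{M}_{n},\ \left\vert \zeta \right\vert =1\right) .
\]
For each fixed $x$ the eigenvalues of $R\left( x\right) +\zeta S\left( x\right) $ depend analytically on $\zeta $ (with at most algebraic branch points), so $\zeta \mapsto \log \rho \left( R\left( x\right) +\zeta S\left( x\right) \right) $, being the maximum of the subharmonic functions $\log \left\vert \lambda _{j}\left( \zeta \right) \right\vert $, is subharmonic on $\mathbb{C}$. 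As it is constant on the unit circle, the maximum principle gives $\rho \left( R\left( x\right) +\zeta S\left( x\right) \right) \leq \rho \left( x\right) $ for $\left\vert \zeta \right\vert \leq 1$, whence $\rho \left( R\left( x\right) \right) \leq \rho \left( x\right) $; applying the same reasoning to $\nu \mapsto \log \rho \left( \nu R\left( x\right) +S\left( x\right) \right) $ (which also equals $\log \rho \left( x\right) $ on $\left\vert \nu \right\vert =1$) yields $\rho \left( S\left( x\right) \right) \leq \rho \left( x\right) $. Thus both $R$ and $S$ are spectral-radius contractions.

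The heart of the proof, and the step I expect to be the main obstacle, is to upgrade this pointwise information into the global dichotomy. The difficulty is genuine: at a single matrix $x$ both $R\left( x\right) $ and $S\left( x\right) $ may be nonzero (for instance $R\left( x\right) =\mathrm{diag}\left( \rho \left( x\right) ,0\right) $ and $S\left( x\right) =\mathrm{diag}\left( 0,\rho \left( x\right) \right) $ satisfy the boundary identity), so the conclusion must exploit the complex-linearity of $R$ and the conjugate-linearity of $S$ simultaneously at \emph{all} points. My plan is to show that one of $R$, $S$ carries enough of the peripheral spectrum that, after a unimodular normalization $\gamma $, either $\gamma ^{-1}T\left( x\right) $ and $x$, or $\gamma ^{-1}\overline{T}\left( x\right) $ and $x$, always have a common eigenvalue. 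For this I would combine three ingredients: the fact that $\det \left( \lambda I-R\left( x\right) -\zeta S\left( x\right) \right) $ is a polynomial in $\zeta $, so that the vanishing of the characteristic polynomial of $x$ at a peripheral eigenvalue propagates from the circle to all of $\mathbb{C}$ (the device used twice in the proof of Lemma~\ref{l1a}); the subharmonic maximum principle above, to locate the peripheral eigenvalue as $\zeta $ runs over the unit circle; and a continuity argument to pin down a single common value. Once the common-eigenvalue relation is established for $\gamma ^{-1}T$ (respectively $\gamma ^{-1}\overline{T}$), Lemma~\ref{l1a} applies to that $\mathbb{R}$-linear map and forces it to be $\mathbb{C}$-linear; since complex-linearity is unaffected by the scalar factor $\gamma $, we conclude that $T$ (respectively $\overline{T}$) is $\mathbb{C}$-linear.
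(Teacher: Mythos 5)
Your reduction of the statement to the dichotomy $S\equiv 0$ or $R\equiv 0$, and the subharmonicity argument giving $\rho (R(x))\leq \rho (x)$ and $\rho (S(x))\leq \rho (x)$, are correct, but the proof stops exactly at the step you yourself flag as the heart of the matter, and the devices you list will not carry it. The polynomial-in-$\zeta$ trick from the proof of Lemma \ref{l1a} requires the hypothesis that $R(x)+\zeta S(x)$ and $x$ have a \emph{common eigenvalue} for every unimodular $\zeta$, so that the characteristic polynomial of $x$ evaluated at that matrix has zero determinant on the circle and hence for all $\zeta$; from $\rho (R(x)+\zeta S(x))=\rho (x)$ you only know that some eigenvalue has the right \emph{modulus}, and it need not be an eigenvalue of $x$ at all (the map $T(x)=ix$ preserves the spectral radius, yet $T(I_n)$ and $I_n$ have no common eigenvalue), so no polynomial identity in $\zeta$ is available. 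For the same reason the ``unimodular normalization $\gamma$'' cannot be pinned down by a continuity argument alone: to know such a $\gamma$ exists you must first know that $T$ maps scalar matrices to scalar matrices, and nothing in the proposal establishes that. As written, the proposal is a correct reformulation plus an unproved main step.

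For comparison, the paper works with $T$ itself rather than its parts and supplies precisely the missing ingredients. It first proves $T$ is bijective (if $T(x)=0$ then $\rho (y+x)=\rho (y)$ for all $y$, so $x=0$ by the characterization of the radical, \cite[Theorem 5.3.1]{Au}); then that $T$ maps the center $\mathcal{Z}(\mathcal{M}_{n})$ into itself, using the characterization that $x$ is central if and only if $\rho (x+y)\leq M(1+\rho (y))$ for all $y$ (\cite[Theorem 5.2.2]{Au}) together with surjectivity. This gives $T(I_{n})=\mu I_{n}$ with $\left\vert \mu \right\vert =1$; after normalizing, $T(iI_{n})=\pm iI_{n}$, and in the $+$ case one shows $T$ preserves the peripheral spectrum via the computation $\rho (T(x)+\lambda I_{n})=\rho (x+\lambda I_{n})=2\left\vert \lambda \right\vert$ for a peripheral eigenvalue $\lambda$ of $x$, which forces $\lambda \in \sigma (T(x))$; only then does Lemma \ref{l1a} apply (to $\mu ^{-1}T$ or $\mu ^{-1}\overline{T}$, which suffices since a unimodular scalar factor does not affect $\mathbb{C}$-linearity). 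If you wish to keep your decomposition $T=R+S$, you would need an argument of comparable strength — something that converts the pointwise spectral-radius bounds on $R$ and $S$ into the global dichotomy — and your own example of $R(x)$ and $S(x)$ both nonzero at a single point shows this cannot be done pointwise.
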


\begin{proof} 
Even though $T$ is not supposed to be $\mathbb{C}$-linear,
it shares all the basic properties of spectral isometries \cite{MSo}. Let us
first prove that $T$ is bijective. Since $T$ is $\mathbb{R}$-linear and $%
\mathcal{M}_{n}$ is of finite dimension over $\mathbb{R}$, it suffices
to prove that $T$ is injective. Since $T$ is additive, we must prove that $T\left( x\right) =0$ implies $x=0$. This follows from the identity 
\[
\rho (y)=\rho (T(y))=\rho (T(y)+T(x))=\rho (T(y+x))=\rho (y+x),
\]
which is valid for all $y\in \mathcal{M}_{n}$, and from the characterization
of the radical given by \cite[Theorem 5.3.1]{Au}.

We now prove that $T$ sends the center $\mathcal{Z}(\mathcal{M}_{n})$ of $%
\mathcal{M}_{n}$ into itself. By \cite[Theorem 5.2.2]{Au}, we have $x\in 
\mathcal{Z}(\mathcal{M}_{n})$ if and only if there exists $M>0$ such that $%
\rho \left( x+y\right) \leq M(1+\rho \left( y\right) )$ for all $y\in 
\mathcal{M}_{n}$. So if $x=\lambda I_{n}$ for some $\lambda \in \mathbb{C}$,
then for all $y\in \mathcal{M}_{n}$ we have 
\begin{eqnarray*}
\rho \left( T(x)+y\right) &=&\rho \left( T(x)+T(z)\right) =\rho \left(
x+z\right) \\
&\leq &M(1+\rho \left( z\right) )=M(1+\rho \left( y\right) ).
\end{eqnarray*}%
(Since $T$ is bijective, there exists $z$ such that $T\left( z\right) =y$.)
Thus $T(x)\in \mathcal{Z}(\mathcal{M}_{n})$. In particular, $T\left(
I_{n}\right) =\mu I_{n}$ for some $\left\vert \mu \right\vert =1$. So by
multiplying $T$ with $\mu ^{-1}$, we may suppose that $T\left( I_{n}\right)
=I_{n}$. Also, $T\left( iI_{n}\right) =\zeta I_{n}$ for some complex number $\zeta$ with $\left\vert
\zeta \right\vert =1$. Therefore 
\[
\sqrt{r^{2}+s^{2}}=\rho ((s+ri)I_{n}))=\rho (T((s+ri)I_{n}))=\rho ((s+\zeta
r)I_{n}))=\left\vert s+r\zeta \right\vert
\]
for all $r,s\in \mathbb{R}$ implies that either $\zeta =i$ or $\zeta =-i$.
In the first case we have $T\left( iI_{n}\right) =iI_{n}$ and hence, by the $%
\mathbb{R}$-linearity of $T$, we get $T\left( \lambda I_{n}\right) =\lambda
I_{n}$ for all $\lambda \in \mathbb{C}$. In the second case we obtain $%
\overline{T\left( \lambda I_{n}\right) }=\lambda I_{n}$ for all $\lambda \in 
\mathbb{C}$.

We complete the proof by showing, for example, that if $T\left( \lambda
I_{n}\right) =\lambda I_{n}$ for all $\lambda \in \mathbb{C}$ then $T$
preserves the peripheral spectrum. If this is true, then $x$ and $T\left(
x\right) $ always have at least one common eigenvalue and Lemma \ref{l1a} implies that $%
T$ is $\mathbb{C}$-linear. So let $x\in \mathcal{M}_{n}$ and consider $%
\lambda \in \sigma \left( x\right) $ such that $\rho \left( x\right)
=\left\vert \lambda \right\vert $. Then 
\[
\rho \left( T\left( x\right) +\lambda I_{n}\right) =\rho \left( T(x+\lambda
I_{n})\right) =\rho \left( x+\lambda I_{n}\right) =2\left\vert \lambda
\right\vert =2\rho \left( x\right) ,
\]
and hence there exists $\alpha \in \sigma (T\left( x\right) )$ such that $%
\left\vert \alpha +\lambda \right\vert =2\left\vert \lambda \right\vert $.
Since $\left\vert \alpha \right\vert \leq \rho (T(x))=\left\vert \lambda
\right\vert $, it follows that $\alpha =\lambda $. Therefore, the peripheral spectrum
of $x$ lies inside the peripheral spectrum of $T\left( x\right) $. Since $%
T^{-1}$ also preserves the spectral radius and $T^{-1}\left( \lambda
I_{n}\right) =\lambda I_{n}$ for all $\lambda \in \mathbb{C}$, we may
conclude that $x$ and $T\left( x\right) $ always have the same peripheral
spectrum.
\end{proof}

\begin{cor}
\label{c2} Suppose that $T:\mathcal{M}_{n}\rightarrow \mathcal{M}_{n}$ is an 
$\mathbb{R}$-linear mapping which preserves the spectral radius. There
exists then a complex number $\gamma $ of modulus one such that either $%
\gamma ^{-1}T$ or $\gamma ^{-1}\overline{T}$ is of the form (\ref{1}).
\end{cor}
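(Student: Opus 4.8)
The plan is to bootstrap from Corollary \ref{c1}, which already gives that one of $T$ or $\overline{T}$ is $\mathbb{C}$-linear, and then to upgrade $\mathbb{C}$-linearity to the explicit form (\ref{1}) by reducing to an eigenvalue-sharing situation. First I would observe that the hypotheses are symmetric under passing to $\overline{T}$: since $\sigma(\overline{A})=\overline{\sigma(A)}$ we have $\rho(\overline{T}(x))=\rho(\overline{T(x)})=\rho(T(x))=\rho(x)$, and $\overline{T}$ is again $\mathbb{R}$-linear. Hence it suffices to treat the case in which $T$ itself is $\mathbb{C}$-linear; the other case then follows verbatim with $T$ replaced by $\overline{T}$, producing the corresponding $\gamma^{-1}\overline{T}$.

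So assume $T$ is $\mathbb{C}$-linear and $\rho(T(x))=\rho(x)$ for all $x$. The center-preservation established in the proof of Corollary \ref{c1} shows that $T(I_n)=\gamma I_n$ for some $\gamma\in\mathbb{C}$ with $\left\vert\gamma\right\vert=1$. I would then set $T_0:=\gamma^{-1}T$, which is still $\mathbb{C}$-linear and spectral-radius preserving (because $\left\vert\gamma^{-1}\right\vert=1$), and which now additionally satisfies $T_0(I_n)=I_n$, hence $T_0(\lambda I_n)=\lambda I_n$ for every $\lambda\in\mathbb{C}$ by $\mathbb{C}$-linearity.

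Next I would reuse the peripheral-spectrum argument from the final paragraph of the proof of Corollary \ref{c1}: for a map normalized so that $T_0(\lambda I_n)=\lambda I_n$, that argument shows that $T_0$ preserves the peripheral spectrum, and in particular that $x$ and $T_0(x)$ always share an eigenvalue. At this point $T_0$ is a $\mathbb{C}$-linear map such that $T_0(x)$ and $x$ always have at least one common eigenvalue, which is exactly the situation covered by \cite[Theorem 3]{AA} (the same result invoked in the proof of Lemma \ref{l1a}); it yields that $T_0$ is of the form (\ref{1}). Thus $\gamma^{-1}T=T_0$ is of the form (\ref{1}), and in the conjugate case we likewise obtain $\gamma^{-1}\overline{T}$ of the form (\ref{1}).

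I expect the only genuinely delicate point to be the passage from \emph{same spectral radius} to \emph{common eigenvalue}: this is precisely where the normalization $T_0(\lambda I_n)=\lambda I_n$ is essential, since it is what allows the shift identity $\rho(T_0(x)+\lambda I_n)=\rho(x+\lambda I_n)$ to force a peripheral eigenvalue of $x$ to be an eigenvalue of $T_0(x)$. Everything else is bookkeeping: verifying that conjugation and scaling by a unimodular constant preserve $\mathbb{R}$-linearity and the spectral-radius hypothesis, and assembling the two cases into the single statement carrying the constant $\gamma$.
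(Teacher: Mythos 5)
Your argument is correct, and it follows the paper's proof in its first half: both reduce via Corollary \ref{c1} to the case where $T$ (or $\overline{T}$) is $\mathbb{C}$-linear, and both extract $T(I_n)=\gamma I_n$ with $\left\vert\gamma\right\vert=1$ from the center-preservation step already established in the proof of Corollary \ref{c1}. Where you diverge is the concluding step. The paper simply observes that $\gamma^{-1}T$ is a \emph{unital} surjective spectral isometry and invokes the Mathieu--Sourour result \cite[Corollary 5]{MSo} to conclude it is a Jordan isomorphism, hence of the form (\ref{1}); this makes the proof two lines but leans on an external structural theorem about spectral isometries (plus the standard fact that Jordan automorphisms of $\mathcal{M}_n$ are inner automorphisms or antiautomorphisms). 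You instead rerun the peripheral-spectrum argument from the last paragraph of the proof of Corollary \ref{c1}, which applies verbatim to $T_0=\gamma^{-1}T$ because that argument only used $\mathbb{R}$-linearity, the normalization $T_0(\lambda I_n)=\lambda I_n$, and radius preservation; this yields that $T_0(x)$ and $x$ always share a (peripheral) eigenvalue, and then \cite[Theorem 3]{AA} gives the form (\ref{1}). Your route is a bit longer but more self-contained relative to the paper, since it reuses machinery already present (the shift trick and the Akbari--Aryapoor theorem, which the paper cites elsewhere) rather than importing the spectral-isometry result; the paper's route is shorter and makes the connection to the general theory of spectral isometries explicit. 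Both are valid, and your handling of the conjugate case and of the unimodular normalization is sound.
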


\begin{proof} 
We use Corollary \ref{c1} and we suppose, for example, that $%
T$ is $\mathbb{C}$-linear. As we have seen in the proof of Corollary \ref{c1}%
, since $\rho (T\left( x\right) )=\rho (x)$ for all $x$ we obtain that $T\left(
I_{n}\right) =\gamma I_{n}$ for some $\left\vert \gamma \right\vert =1$.
Thus $\gamma ^{-1}T:\mathcal{M}_{n}\rightarrow \mathcal{M}_{n}$ is a unital
spectral isomorphism, and therefore a Jordan isomorphism \cite[Corollary 5]%
{MSo}. Thus $\gamma ^{-1}T$ is of the form (\ref{1}).
\end{proof}

\section{Proofs for the case of Lipschitz functions}

Let us first recall that the real (respectively, complex) differential of a
function $F:\mathcal{M}_{n}\rightarrow \mathcal{M}_{m}$ at a point $x_{0}\in 
\mathcal{M}_{n}$ is a mapping $\left( DF\right) _{x_{0}}:\mathcal{M}%
_{n}\rightarrow \mathcal{M}_{m}$ which is linear with respect to the real
(respectively, complex) scalars, and such that 
\[
\lim_{x\rightarrow 0}\frac{\left\Vert F\left( x_{0}+x\right) -F\left(
x_{0}\right) -(DF)_{x_{0}}(x)\right\Vert }{\left\Vert x\right\Vert }=0.
\]
(We work with the operator norm on $\mathcal{M}_{n}$.)

We shall use of the following lemmas. The first one is a consequence of a
result of Rademacher \cite[page 50]{Z}.

\begin{lem}
\label{l3} Let $n$ be a positive integer and $f:\mathcal{M}_{n}\rightarrow 
\mathbb{C}$ a Lipschitz function. Then $f$ has real differentials a.e. with
respect to the Lebesgue measure $m$ on $\mathcal{M}_{n}$.
\end{lem}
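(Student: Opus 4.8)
The plan is to reduce Lemma \ref{l3} to the classical theorem of Rademacher by setting up the appropriate real-linear identifications. First I would regard $\mathcal{M}_n$, as a real vector space, as $\mathbb{R}^{2n^2}$: a matrix is determined by the real and imaginary parts of its $n^2$ complex entries, and reading off these $2n^2$ real coordinates gives a linear isomorphism of real vector spaces. Under this isomorphism the Lebesgue measure $m$ on $\mathcal{M}_n$ corresponds, up to a positive multiplicative constant, to the standard Lebesgue measure on $\mathbb{R}^{2n^2}$, so that the two notions of ``$m$-almost everywhere'' coincide. Identifying also the target $\mathbb{C}$ with $\mathbb{R}^2$ via $z\mapsto(\operatorname{Re}z,\operatorname{Im}z)$ turns $f$ into a map $\mathbb{R}^{2n^2}\rightarrow\mathbb{R}^2$.

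Next I would deal with the choice of norm. We work with the operator norm on $\mathcal{M}_n$, whereas Rademacher's theorem is stated for the Euclidean norm on $\mathbb{R}^{2n^2}$. Since $\mathcal{M}_n$ is finite-dimensional, all norms on it are equivalent; hence $f$ is Lipschitz for the operator norm if and only if it is Lipschitz for the Euclidean norm. The same equivalence of norms shows that the existence of a real differential at a point, defined through the limit in the display preceding Lemma \ref{l3}, is insensitive to which equivalent norm is placed on domain and target: passing to an equivalent norm alters the quotient inside that limit only by factors bounded above and below by positive constants, so the limit is $0$ for one norm exactly when it is for the other. Thus both the hypothesis (Lipschitz) and the conclusion (existence of the real differential) transport faithfully across the identifications.

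With these reductions in place, Rademacher's theorem \cite[page 50]{Z}, applied to the Lipschitz map $\mathbb{R}^{2n^2}\rightarrow\mathbb{R}^2$ (equivalently, to each of its two real coordinate functions), guarantees differentiability at Lebesgue-almost every point, and the differential so produced is automatically real-linear. Translating back through the identifications yields precisely the assertion that $f$ admits a real differential $m$-almost everywhere on $\mathcal{M}_n$. I do not expect any genuine obstacle here: the mathematical content is entirely Rademacher's theorem, and the only care required is the bookkeeping above — verifying that the linear identifications are measure-preserving up to a constant and that equivalence of norms preserves both the Lipschitz condition and the notion of real differentiability.
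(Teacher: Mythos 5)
Your proposal is correct and follows the same route as the paper, which simply invokes Rademacher's theorem from \cite[page 50]{Z} without further comment; you have merely spelled out the routine identifications ($\mathcal{M}_n\cong\mathbb{R}^{2n^2}$, $\mathbb{C}\cong\mathbb{R}^2$, equivalence of norms, compatibility of Lebesgue measures) that the paper leaves implicit. No gap here.
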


The second one is the following generalization of a result from \cite[Lemma
3.2]{KK}.

\begin{lem}
\label{l4}\cite[Lemma 4]{Mr} Let $f:\mathcal{M}_{n}\rightarrow \mathbb{C}$
be a Lipschitz function and assume that $f$ has complex differentials a.e.
with respect to the Lebesgue measure on $\mathcal{M}_{n}$. Then for all $%
a,b\in \mathcal{M}_{n}$, the function $f_{a,b}:\mathbb{C}\rightarrow \mathbb{%
C}$ given by 
\[
f_{a,b}\left( z\right) =f\left( a+bz\right) \qquad \left( z\in \mathbb{C}%
\right)
\]
is affine.
\end{lem}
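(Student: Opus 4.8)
The plan is to show that $f_{a,b}$ is an entire function of $z$; since it is also Lipschitz (being the composition of the Lipschitz map $f$ with the affine map $z \mapsto a + bz$), its derivative is a bounded entire function, hence constant by Liouville, so $f_{a,b}$ is affine. If $b = 0$ then $f_{a,b}$ is constant and there is nothing to prove, so assume $b \neq 0$. The essential difficulty is that complex differentiability of $f$ is assumed only almost everywhere, while the single complex line $\{a + bz : z \in \mathbb{C}\}$ is a Lebesgue-null subset of $\mathcal{M}_n$; thus one cannot directly assert that $f$ is complex differentiable at the points $a + bz$.

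To circumvent this I would first establish the conclusion along almost every parallel line, by a Fubini argument. Write $\mathcal{M}_n = \mathbb{C} b \oplus V$ as a direct sum of complex subspaces, so that each $x \in \mathcal{M}_n$ is uniquely $x = zb + v$ with $z \in \mathbb{C}$ and $v \in V$; under this $\mathbb{C}$-linear identification Lebesgue measure becomes a constant multiple of the product of Lebesgue measures on $\mathbb{C}$ and on $V$. Let $B$ be the null set where $f$ fails to have a complex differential. By Fubini, for almost every $v \in V$ the slice $\{z \in \mathbb{C} : zb + v \in B\}$ is null in $\mathbb{C}$. Fix such a $v$ and set $g_v(z) = f(v + zb)$. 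Then $g_v$ is Lipschitz, and at each $z$ with $v + zb \notin B$ the $\mathbb{C}$-linearity of $(Df)_{v+zb}$ gives $(Df)_{v+zb}(wb) = w\,(Df)_{v+zb}(b)$, so $g_v$ is complex differentiable with $g_v'(z) = (Df)_{v+zb}(b)$; hence $g_v$ satisfies the Cauchy--Riemann equations a.e. A Lipschitz function on $\mathbb{C}$ whose pointwise $\bar\partial$-derivative vanishes a.e. is holomorphic: being Lipschitz it lies in $W^{1,\infty}_{\mathrm{loc}}$, so its distributional $\bar\partial$ equals the a.e. pointwise one and hence vanishes, and $\oint_{\partial \Delta} g_v\, dz = 0$ on triangles by Green's theorem, whence Morera applies. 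Thus $g_v$ is entire and Lipschitz, hence $\mathbb{C}$-affine.

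It remains to pass from almost every $v$ to every $v$, and here one must take care to preserve $\mathbb{C}$-affineness rather than merely $\mathbb{R}$-affineness: a naive criterion such as vanishing of second differences is too weak, since for instance $z \mapsto \bar z$ has vanishing second differences. Instead I would use continuity directly. Let $A = \{v \in V : g_v \text{ is } \mathbb{C}\text{-affine}\}$. If $v_k \to v$ with $v_k \in A$, then continuity of $f$ gives $g_{v_k} \to g_v$ pointwise; writing $g_{v_k}(z) = \alpha_k + \beta_k z$, the values at $z = 0$ and $z = 1$ show that $\alpha_k$ and $\beta_k$ converge, so $g_v$ is again $\mathbb{C}$-affine and $A$ is closed. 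Since $A$ has full measure it is dense, and a closed dense subset of $V$ is all of $V$, so $A = V$. Finally, decomposing the given $a = z_0 b + v_0$ with $v_0 \in V$, we obtain $f_{a,b}(z) = g_{v_0}(z_0 + z)$, which is $\mathbb{C}$-affine in $z$.

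The main obstacle is precisely the Fubini step together with the transfer of holomorphy to the exceptional lines: almost-everywhere differentiability of $f$ gives no information whatsoever about the prescribed null line through $a$, and control is recovered only after averaging over the transverse directions $v$ and then invoking continuity of $f$ to close the full-measure set of good lines up into all of them.
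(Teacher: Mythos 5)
Your argument is correct: the Fubini slicing over parallel complex lines, the identification of the distributional with the pointwise a.e.\ derivative for a Lipschitz function (giving holomorphy of $g_v$ on almost every line), Liouville to upgrade holomorphic-plus-Lipschitz to affine, and the closed-plus-dense argument (with pointwise convergence of the coefficients $\alpha_k,\beta_k$, correctly preserving $\mathbb{C}$-affineness rather than mere $\mathbb{R}$-affineness) to pass from almost every line to every line are all sound, and you correctly identify the real difficulty, namely that the prescribed line $\{a+bz\}$ is itself a null set. Note that the paper gives no proof of this statement at all --- it is quoted from Mr\v{c}un \cite[Lemma 4]{Mr}, itself a generalization of a lemma of Kowalski and Slodkowski \cite{KK}, and the proof there proceeds by essentially this same slicing strategy, so your proposal is in effect a correct reconstruction of the cited argument.
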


\begin{proof}[Proof of Theorem \ref{th2}] We use the ideas from \cite[the proof
of Theorem 1]{Mr}. Let $F_{s,k}:\mathcal{M}_{n}\rightarrow \mathbb{C}$ be
given by 
\[
F_{s,k}\left( x\right) =(F(x))_{s,k}\qquad (x\in \mathcal{M}_{n}),
\]
for $s,k=1,...,n$. Since $F$ is Lipschitz this implies that all the mappings $%
F_{s,k}$ are Lipschitz. By Lemma \ref{l3}, the function $F_{s,k}$ has real
differentials on $\mathcal{M}_{n}\backslash Z_{s,k}$, with $m(Z_{s,k})=0$.
Denoting $Z=\bigcup_{s,k=1}^{n}Z_{s,k}$, we see that $m(Z)=0$ and real
differential $(DT_{s,k})_{x}$ exists for all $s,k=1,...,n$ and for all $x\in 
\mathcal{M}_{n}\backslash Z$. Then for all $x\in \mathcal{M}_{n}\backslash Z$
we conclude that $F:\mathcal{M}_{n}\rightarrow \mathcal{M}_{n}$ has real
differential $\left( DF\right) _{x}:\mathcal{M}_{n}\rightarrow \mathcal{M}%
_{n}$ at $x$ given by 
\[
\left( DF\right) _{x}\left( u\right) =((DF_{s,k})_{x}(u))_{s,k=1}^{n}\qquad
(u\in \mathcal{M}_{n}).
\]
Given $x\in \mathcal{M}_{n}\backslash Z$, we have by (\ref{3}) that for all strictly positive integer $m$ 
\[
\sigma \left( \frac{F\left( x+u/m\right) -F\left( x\right) }{1/m}\right)
\bigcap \sigma \left( u\right) \neq \varnothing \qquad (u\in \mathcal{M}%
_{n}).
\]
For $m\rightarrow \infty $, using the continuity
properties for the spectrum it follows that 
\[
\sigma (\left( DF\right) _{x}\left( u\right) )\bigcap \sigma \left( u\right)
\neq \varnothing \qquad (x\in \mathcal{M}_{n}\backslash Z;\;u\in \mathcal{M}%
_{n}).
\]
Then Lemma \ref{l1a} shows that $\left( DF\right) _{x}:\mathcal{M}
_{n}\rightarrow \mathcal{M}_{n}$ is $\mathbb{C}$-linear, and hence $
(DF_{s,k})_{x}:\mathcal{M}_{n}\rightarrow \mathbb{C}$ are $\mathbb{C}$
-linear mappings for all $x\in \mathcal{M}_{n}\backslash Z$ and $s,k=1,...,n$. By Lemma \ref{l4}, fixing any $a,b\in \mathcal{M}_{n}$ we have
that $\lambda \mapsto F_{s,k}\left( a+\lambda b\right) $ is affine, and
hence the same must hold for $F_{a,b}:\mathbb{C}\rightarrow \mathcal{M}_{n}$
given by 
\[
F_{a,b}\left( \lambda \right) =F\left( a+\lambda b\right) \qquad \left(
\lambda \in \mathbb{C}\right) .
\]
Thus $F_{a,b}\left( \lambda \right) =\lambda
(F_{a,b}(1)-F_{a,b}(0))+F_{a,b}(0)$ for all $\lambda \in \mathbb{C}$, that
is 
\begin{equation}
F(a+\lambda b)=\lambda (F(a+b)-F(a))+F(a)\qquad (\lambda \in \mathbb{C}%
;\;a,b\in \mathcal{M}_{n}).  \label{9}
\end{equation}%
Taking $a=0$ in (\ref{9}) we obtain that $F(\lambda b)=\lambda F(b)$ for
every $\lambda \in \mathbb{C}$ and for every $b\in \mathcal{M}_{n}$. Taking $%
\lambda =1$ and replacing $b$ by $(c-a)/2$ in (\ref{9}) we get $%
F(c)=F(a+c)-F(a)$, which yields 
\[
F(a+c)=F(a)+F(c)\qquad (a,c\in \mathcal{M}_{n}).
\]
Thus $F:\mathcal{M}_{n}\rightarrow \mathcal{M}_{n}$ is $\mathbb{C}$-linear.

For $y=0$ in (\ref{3}) we have that $\sigma \left( F\left( x\right) \right)
\bigcap \sigma \left( x\right) \neq \varnothing $ for all $x\in \mathcal{M}%
_{n}$. Since $F$ is $\mathbb{C}$-linear, it follows from \cite[Theorem 2]{AA}
that $F $ is indeed of the form (\ref{1}).
\end{proof}

\section{Proofs for the case of $\mathcal{C}^{1}$ functions}

For conjugate matrices $x$ and $y$ in $\mathcal{M}_{n}$, we shall write $%
x\sim y$.

\begin{proof}[Proof of Theorem \ref{th4}] By hypothesis, the function $F$
has real differential $\left( DF\right) _{x}$ at every point $x$ of $U$. The
proof of Theorem \ref{th2} shows that in fact $\left( DF\right) _{x}:%
\mathcal{M}_{n}\rightarrow \mathcal{M}_{n}$ is $\mathbb{C}$-linear, for all $%
x$ in $U$. This implies that $F$ is holomorphic on $U$ \cite[Theorem 1.2]{Ran}. By (\ref{3_1}) we also have that 
\[
\sigma (\left( DF\right) _{x}\left( v\right) )\bigcap \sigma \left( v\right)
\neq \varnothing \qquad (x\in U;\;v\in \mathcal{M}_{n}),
\]
and using \cite[Theorem 2]{AA} we get that $\left( DF\right) _{x}$ is of the
form (\ref{1}), for all $x$ in $U$. In particular $\left( DF\right) _{0}:%
\mathcal{M}_{n}\rightarrow \mathcal{M}_{n}$ is invertible, and by the
Inverse mapping theorem there exists $\delta >0$ such that by denoting $W$
the open ball (with respect to the operator norm) of center $0$ and
radius $\delta $ in $\mathcal{M}_{n}$ we have that $W\subseteq U$, $F\left(
W\right) \subseteq \mathcal{M}_{n}$ is open and $F:W\rightarrow F\left(
W\right) $ is biholomorphic. For $x$ with $\left\Vert x\right\Vert <\delta
/2 $, let us define $\varphi _{x}:\{h:\left\Vert h\right\Vert <\delta
/2\}\rightarrow \mathcal{M}_{n}$ by putting $\varphi _{x}\left( h\right)
=F\left( x+h\right) -F\left( x\right) $. By (\ref{3_1}), we have $\sigma
(\varphi _{x}\left( h\right) )\bigcap \sigma \left( h\right) \neq
\varnothing $ for all $\left\Vert h\right\Vert <\delta /2$. Using \cite[%
Corollary 1.2]{CR} we obtain that $\varphi _{x}\left( h\right) \sim h$ for $%
\left\Vert h\right\Vert <\delta /2$. That is, $F\left( x+h\right) -F\left(
x\right) \sim h$ for all $x,h$ with $\left\Vert x\right\Vert ,\left\Vert
h\right\Vert <\delta /2$.

There exists an invertible $u\in \mathcal{M}_{n}$ such that either $\left(
DF\right) _{0}(v)=uvu^{-1}$ for all $v$, or $\left( DF\right)
_{0}(v)=uv^{t}u^{-1}$ for all $v$. If the first case occurs, we consider $%
G:U\rightarrow \mathcal{M}_{n}$ given by $G\left( x\right) =u^{-1}F\left(
x\right) u$, and in the second case we consider $G:U\rightarrow \mathcal{M}%
_{n}$ given by $G\left( x\right) =(u^{-1}F\left( x\right) u)^{t}$. In both
cases, we obtain a holomorphic function $G$ on $U$ such that $G\left(
0\right) =0$, $G^{\prime }\left( 0\right) $ is the identity on $\mathcal{M}%
_{n}$, and 
\begin{equation}
G\left( x+h\right) -G\left( x\right) \sim h\qquad (\left\Vert x\right\Vert
,\left\Vert h\right\Vert <\delta /2).  \label{10}
\end{equation}%
We finish the proof by showing that $G$ with the above properties must be
the identity on $U$. To see this, first observe that by taking $x=0$ in (\ref%
{10}) we have $G\left( h\right) \sim h$ for all $h$ with $\left\Vert
h\right\Vert <\delta /2$. Then $G\left( h\right) ^{2}\sim h^{2}$ for all
such $h$, and thus $\hbox{Tr}(G\left( h\right) )=\hbox{Tr}(h)$ and $\hbox{Tr}%
(G\left( h\right) ^{2})=\hbox{Tr}(h^{2})$ for $\left\Vert h\right\Vert
<\delta /2$. Also, for $\left\Vert x\right\Vert ,\left\Vert y\right\Vert
<\delta /4$ by (\ref{10})  we have $(G\left( x\right) -G(y))\sim (x-y)$. This gives 
$(G\left( x\right) -G(y))^{2}\sim (x-y)^{2}$, and in particular 
\[
\hbox{Tr}((G\left( x\right) -G(y))^{2})=\hbox{Tr}((x-y)^{2}).
\]
For $\left\Vert x\right\Vert ,\left\Vert y\right\Vert
<\delta /4$, since $\hbox{Tr}(G\left( x\right) ^{2})=\hbox{Tr}(x^{2})$ and $\hbox{Tr}
(G\left( y\right) ^{2})=\hbox{Tr}(y^{2})$ it follows that $\hbox{Tr}(G\left( x\right)
G(y))=\hbox{Tr}(xy)$.  Thus for all $x$ with $\left\Vert x\right\Vert <\delta /4$ and
for all $y$ we have that 
\begin{equation}
\hbox{Tr}(G\left( x\right) G(\lambda y))=\lambda \hbox{Tr}(xy)  \label{11}
\end{equation}%
for all $\lambda $ in a neighborhood of $0\in \mathbb{C}$. Since $G^{\prime
}\left( 0\right) $ is the identity, it follows that $G(\lambda y)=\lambda
y+\sum_{j=2}^{\infty }\lambda ^{j}b_{j}$ for $\lambda $ near $0$, for some $%
(b_{j})_{j\geq 2}\subseteq \mathcal{M}_{n}$. By equating the coefficients of $%
\lambda $ in (\ref{11}) we get 
\[
\hbox{Tr}((G\left( x\right) -x)y)=0\qquad (\left\Vert x\right\Vert <\delta
/4;\;y\in \mathcal{M}_{n}).
\]
Thus $G\left( x\right) =x$ for $\left\Vert x\right\Vert <\delta /4$, and by
the identity principle for holomorphic functions we obtain that $G\left(
x\right) =x$ for all $x\in U$.
\end{proof}

\begin{proof}[Proof of Theorem \ref{th5}] Using the continuity of the spectral
radius, from (\ref{3_a}) we obtain 
\[
\rho (\left( DF\right) _{x}\left( v\right) )=\rho \left( v\right) \qquad
(x\in U;\;v\in \mathcal{M}_{n}).
\]
By Corollary \ref{c1}, given any $x\in U$ we have that either $\left(
DF\right) _{x}$ is $\mathbb{C}$-linear, or $\left( DF\right) _{x}$ is $%
\overline{\mathbb{C}}$-linear. Denoting 
\[
U_{1}=\{x\in U:\left( DF\right) _{x}\mbox{ is }\mathbb{C}\mbox{
-linear}\}
\]
and
\[
U_{2}=\{x\in U:\left( DF\right) _{x}\mbox{ is }\overline{\mathbb{C}}%
\mbox{-linear}\},
\]
we conclude that $U_{1},U_{2}\subseteq U$ are disjoint subsets (by Corollary \ref{c2},
we have that $\left( DF\right) _{x}:\mathcal{M}_{n}\rightarrow \mathcal{M}%
_{n}$ is always invertible, and hence we cannot have $\left( DF\right)
_{x}=0 $ for some $x$ in $U$) with $U_{1}\bigcup U_{2}=U$. Also, since $F$ is
of class $\mathcal{C}^{1}$ it follows that $U_{1},U_{2}\subseteq U$ are
(possible empty) closed subsets. Since $U$ is a domain, one of the $U_{j}$
is empty. So suppose, for example, that $U=U_{1}$. (In the remaining case,
we work with $\overline{F}$ instead of $F$.) Then \cite[Theorem 1.2]{Ran}
shows that $F$ is holomorphic on $U$.

We now proceed as in the proof of Theorem \ref{th4}; we shall use 
\cite[Corollary 1.4]{CR} instead of \cite[Corollary 1.2]{CR}. For $0\in U$,
by Corollary \ref{c2} there exists a complex number $\gamma $ of modulus one
and an invertible $u$ such that either $\gamma ^{-1}\left( DF\right)
_{0}\left( v\right) =uvu^{-1}$ for all $v$, or $\gamma ^{-1}\left( DF\right)
_{0}\left( v\right) =uv^{t}u^{-1}$ for all $v$. Suppose, for example, that
the first case occurs, and define $G:U\rightarrow \mathcal{M}_{n}$ by
putting $G\left( x\right) =\gamma ^{-1}u^{-1}F\left( x\right) u$. Then $G$
is holomorphic, $G\left( 0\right) =0$, $G^{\prime }\left( 0\right) =$the
identity of $\mathcal{M}_{n}$, and (\ref{3_a}) gives 
\begin{equation}
\rho \left( G\left( x\right) -G\left( y\right) \right) =\rho \left(
x-y\right) \qquad \left( x,y\in U\right) .  \label{12}
\end{equation}%
Since $G^{\prime }\left( 0\right) $ is invertible, there exists $\delta >0$
such that $G\left( W\right) \subseteq \mathcal{M}_{n}$ is open and $%
G:W\rightarrow G\left( W\right) $ is biholomorphic, where $W$ is the open ball
with center $0$ and radius $\delta $ in $\mathcal{M}_{n}$. Taking $y=0$ in (%
\ref{12}) we have $\rho \left( G\left( x\right) \right) =\rho \left(
x\right) $ for all $x\in U$. Using \cite[Corollary 1.4]{CR}, we establish the
existence of $\xi \in \mathbb{C}$ of modulus one such that $G\left( x\right)
\sim \xi x$ for $\left\Vert x\right\Vert <\delta $. Then $G\left( \lambda
I_{n}\right) =\xi \lambda I_{n}$ for $\left\vert \lambda \right\vert <\delta 
$, and taking the derivative with respect to $\lambda $ and using that $%
G^{\prime }\left( 0\right) $ is the identity, we get $\xi =1$. Thus 
\begin{equation}
G\left( x\right) \sim x\qquad \left( \left\Vert x\right\Vert <\delta \right)
.  \label{13}
\end{equation}%
For $\left\Vert x\right\Vert <\delta /2$ in $\mathcal{M}_{n}$, define $%
\varphi _{x}:\{h:\left\Vert h\right\Vert <\delta /2\}\rightarrow \mathcal{M}%
_{n}$ by putting $\varphi _{x}\left( h\right) =G\left( x+h\right) -G\left(
x\right) $. By (\ref{12}), we have $\rho (\varphi _{x}\left( h\right) )=\rho
\left( h\right) $ for all $\left\Vert h\right\Vert <\delta /2$. Using \cite[%
Corollary 1.4]{CR} we verify the existence of $\xi _{x}\in \mathbb{C}$ of
modulus one such that $\varphi _{x}\left( h\right) \sim \xi _{x}h$ for $%
\left\Vert h\right\Vert <\delta /2$. That is, $G\left( x+h\right) -G\left(
x\right) \sim \xi _{x}h$ for all $h$ with $\left\Vert h\right\Vert <\delta /2
$. For $h=\left( \delta /4\right) I_{n}$ we get $G\left( x+\left( \delta
/4\right) I_{n}\right) -G\left( x\right) =\xi _{x}\left( \delta /4\right)
I_{n}$. Taking the trace and using (\ref{13}) we obtain that $\xi _{x}=1$.
Therefore, $G\left( x+h\right) -G\left( x\right) \sim h$ for all $x,h$ with $%
\left\Vert x\right\Vert ,\left\Vert h\right\Vert <\delta /2$, and from this,
exactly as in the final part of the proof of Theorem \ref{th4}, we conclude
that $G$ is the identity on $U$.
\end{proof}

\subsection*{Acknowledgements}
This research was supported by the Slovenian Research Agency
grants P1-0292-0101, J1-9643-0101 and J1-2057-0101. We acknowledge the referee for comments and suggestions.

\end{document}